\theoremstyle{plain}
\newtheorem{thm}{Theorem}[section]
\newtheorem{lemma}[thm]{Lemma}
\newtheorem{prop}[thm]{Proposition}
\newtheorem{cor}[thm]{Corollary}
\theoremstyle{definition}
\newtheorem{dfn}[thm]{Definition}
\newtheorem{ex}[thm]{Example}
\newtheorem{remark}[thm]{Remark}
\numberwithin{equation}{section}
\numberwithin{figure}{section}
\numberwithin{table}{section}
\newcommand{\la}{\langle}
\newcommand{\ra}{\rangle}
\newcommand{\bw}{\boldsymbol{w}}
\newcommand{\bv}{\boldsymbol v}
\newcommand{\calC}{\mathcal C}
\newcommand{\calD}{\mathcal D}
\newcommand{\field}[1]{\mathbf{#1}}
\newcommand{\ZZ}{\field{Z}}
\tikzset{tab/.style={matrix of math nodes,column sep=-.4, row sep=-.4,text height=8pt,text width=8pt,align=center}}
\begin{document}

\title [Homogeneous representations and Dyck paths]
{Homogeneous representations of Type $A$\\ KLR-algebras and Dyck paths}
\author[G. Feinberg]{Gabriel Feinberg}
\address{Department of Mathematics and Statistics,
Haverford College, Haverford, PA 19041, U.S.A. }
\email{gfeinberg@haverford.edu}
\author[K.-H. Lee]{Kyu-Hwan Lee$^{\diamond}$}
\thanks{$^{\diamond}$This work was partially supported by a grant from the Simons Foundation (\#318706).}
\address{Department of
Mathematics, University of Connecticut, Storrs, CT 06269, U.S.A.}
\email{khlee@math.uconn.edu}

\subjclass[2010]{Primary 16G99; Secondary 05E10}
\begin{abstract}
The Khovanov-Lauda-Rouquier (KLR) algebra arose out of attempts to categorify quantum groups.  Kleshchev and Ram proved a result reducing the representation theory of these algebras to the study of irreducible cuspidal representations.  In the finite type $A$, these cuspidal representations are included in the class of homogeneous representations, which are related to fully commutative elements of the corresponding Coxeter groups. In this paper, we study fully commutative elements using combinatorics of Dyck paths. Thereby  we classify and enumerate the homogeneous representations for KLR algebras of types $A$ and obtain a dimension formula for these representations from combinatorics of Dyck paths.
\end{abstract}

\maketitle

\section*{Introduction}

Introduced by Khovanov and Lauda \cite{Khov2009} and independently by Rouquier \cite{Rouq2008}, the Khovanov-Lauda-Rouquier (KLR) algebras (also known as quiver Hecke algebras) have been the focus of many recent studies.  In particular, these algebras categorify the lower (or upper) half of a quantum group.  More precisely, the Cartan datum associated with a Kac-Moody algebra $\mathfrak g$ gives rise to a KLR algebra $R$.  The category of finitely generated projective graded  modules of this algebra can be given a bialgebra structure by taking the Grothendieck group, and taking the induction and restriction functors as multiplication and co-multiplication.  To say that the KLR algebra $R$ categorifies the negative part  $U_q^{-}(\mathfrak g)$ of the quantum group, is to say that this bialgebra is isomorphic to Lusztig's integral form of $U_q^{-}(\mathfrak g)$.

In the paper \cite{Klesh2010}, Kleshchev and Ram significantly reduce the problem of describing the irreducible representations of the KLR algebras.  They defined a class of {\em cuspidal} representations for finite types, and showed that every irreducible representation appears as the head of some induction of these cuspidals, and constructed almost all cuspidal representations. Hill, Melvin, and Mondragon in \cite{Hill2012} completed the construction of cuspidals in all finite types, and re-frame them in a more unified manner.

Furthermore, Lauda and Vazirani imposed a crystal structure on the isomorphism classes of irreducible representations of a KLR algebra.  They showed in \cite{Lauda2011} that this crystal is  isomorphic to  the crystal $B(\infty)$  of the quantum group $U_q(\mathfrak g)$.  Crystals are also used by Benkart, Kang, Oh, and Park in \cite{Benk2011} to give a new approach towards the construction of irreducible representations. For more backgrounds and other developments, see \cite{Brundan2009} and \cite{Kang2011}.

In the process of constructing the cuspidal representations, Kleshchev and Ram defined  a class of representations known as {\em homogeneous representations} \cite{Klesh2008}, those that are concentrated in a single degree.   Homogeneous representations  include most of the cuspidal representations for finite types with a suitable choice of ordering on words.  Therefore it is important to completely understand these representations.
As shown in \cite{Klesh2008}, homogeneous representations can be constructed from the sets of reduced words of {\em fully commutative} elements in the corresponding Coxeter group. These elements were studied by Fan \cite{Fan1996} and Stembridge \cite{Stembridge1996, Stembridge1998}, and are closely related to Temperley-Lieb algebras \cite{Jones1987}. 

Motivated by this connection to the homogeneous representations of KLR algebras, we study, in this paper, fully commutative elements of the Coxeter groups of type $A_n$. We decompose the set of fully commutative elements into natural subsets according to the lengths of fully commutative elements, and study combinatorial properties of these subsets. 
Our main result (Theorem \ref{thm-A}) shows that the fully commutative elements of a given length $k$ can be parametrized by the Dyck paths of semi-length $n$ with the property that  (\textit{sum of peak heights}) $-$ (\textit{number of peaks}) = $k$.
The main idea of the proof is to investigate a canonical form of reduced words for fully commutative elements. 

After the parametrization is obtained, we classify and enumerate the homogeneous representations of KLR algebras of type $A$ according to the decomposition of the set of fully commutative elements (Corollaries \ref{cor-AA}). In their paper \cite{Klesh2008}, Kleshchev and Ram gave a parametrization of homogeneous representations using {\em skew shapes}. Our result uses different combinatorial objects, i.e. Dyck paths, and gives a refinement of the classification.
Furthermore, we obtain a dimension formula for some homogeneous representations using combinatorics of Dyck paths (Proposition \ref{AForm}), which is a reformulation of the Peterson-Proctor formula.

The outline of this paper is as follows. In Section \ref{first}, we fix notations, briefly review the representations of KLR algebras,  and explain the relationship between homogeneous representations and fully commutative elements of a Coxeter group. In Section \ref{ch:A}, we introduce Dyck paths and study a canonical form of reduced words of fully commutative elements and obtain main results of this paper. In Section \ref{last}, we prove a dimension formula for homogeneous representations when the corresponding Dyck paths satisfy a certain condition.

\subsection*{Acknowledgments}
Part of this research was performed	while both authors were visiting Institute for Computational and Experimental Research in Mathematics (ICERM) during the spring of 2013 for the special program ``Automorphic Forms, Combinatorial Representation Theory and Multiple Dirichlet Series".  They wish to thank the organizers and staffs.

\section{KLR Algebras and Homogeneous Representations} \label{first}

\subsection{Definitions}
To define a KLR algebra, we begin with a quiver $\Gamma$.  In this paper, we will focus mainly on quivers of Dynkin types $A_n$, but for the definition, any finite quiver with no double bonds will suffice.  Let $I$ be the set indexing the vertices of $\Gamma$,
and for indices $i\neq j$, we will say that $i$ and $j$ are neighbors if $i\rightarrow j$ or $i \leftarrow j$.  Define $Q_+ = \bigoplus_{i\in I} \mathbb{Z}_{\geq 0} \, \alpha_i$ as the non-negative lattice with basis $\{\alpha_i | i\in I\}$.
The set of all words in the alphabet $I$ is denoted by $\langle  I \rangle$, and for a fixed $\alpha = \sum_{i\in I}c_i\alpha_i \in Q_+$, let $\langle I \ra_\alpha$ be the set of words $\bw$ on the alphabet $I$ such that each $i\in I$ occurs exactly $c_i$ times in $\bw$.  We define the {\em height}   of $\alpha$ to be $\sum_{i\in I}c_i$.  We will write $\bw = [{w_1}, {w_2}, \hdots, {w_d}]$, $w_j \in I$.

Now, fix an arbitrary ground field $\mathbb F$ and choose an element $\alpha\in Q_+$.  Then the \textit{Khovanov-Lauda-Rouquier algebra} $R_\alpha$ is the associative $\mathbb F$-algebra generated by:
 \begin{itemize}
  \item  idempotents
  $\{e(\bw) ~|~ \bw\in \la I \ra_\alpha\}$,

   \item  symmetric generators $\{\psi_1, \hdots, \psi_{d-1}\}$ where $d$ is the height of  $\alpha$,
  \item  polynomial generators
  $\{y_1, \hdots, y_d\}$,
  \end{itemize}

  subject to relations
  \begin{align} \label{eqn-1}
 & e(\bw)e(\bv) = \delta_{\bw\bv}e(\bw), \quad \quad \sum_{\bw\in \la I \ra_\alpha}e(\bw) = 1;
  \\ & y_ke(\bw) = e(\bw)y_k;
   \\  \label{eqn-2}
& \psi_ke(\bw) = e(s_k\bw)\psi_k;
  \\ 
& y_ky_\ell = y_\ell y_k;
 \\ 
& y_k\psi_\ell =\psi_\ell y_k \quad \hbox{for } k\neq \ell,\ell+1;
  \\  &   (y_{k+1}\psi_k - \psi_ky_k)e(\bw) =
    \left\{ \begin{array}{l l}
     e(\bw) & \hbox{if } w_k = w_{k+1},\\
      0 & \hbox{otherwise; }\\
    \end{array}\right.
   \\    & (\psi_k y_{k+1}- y_k\psi_k)e(\bw) =
    \left\{ \begin{array}{l l}
     e(\bw) & \hbox{if } w_k = w_{k+1},\\
      0 & \hbox{otherwise; }\\
    \end{array}\right.
      \\ 
    & \psi_k^2e(\bw) =
    \left\{ \begin{array}{l l}
    0 & \hbox{if } w_k = w_{k+1},\\
      (y_k-y_{k+1})e(\bw) & \hbox{if } w_k\rightarrow w_{k+1},\\
      (y_{k+1}-y_{k})e(\bw) & \hbox{if } w_k\leftarrow w_{k+1},\\
     e(\bw) & \hbox{otherwise; }\\
    \end{array}\right.
  \\ 
  &  \psi_k\psi_\ell = \psi_\ell\psi_k \quad \hbox{for } |k-\ell|>1 ;
  \\  &  (\psi_{k+1}\psi_{k}\psi_{k+1} -\psi_{k}\psi_{k+1}\psi_{k})e(\bw) =
    \left\{ \begin{array}{l l}
       e(\bw)& \hbox{if } w_{k+2} = w_k\rightarrow w_{k+1},\\
       -e(\bw)& \hbox{if } w_{k+2} = w_k\leftarrow w_{k+1},\\
     0 & \hbox{otherwise. }\\
    \end{array}\right.
  \end{align}
Here $\delta_{\bw\bv}$ in \eqref{eqn-1} is the Kronecker delta and, in \eqref{eqn-2}, $s_k$ is the $k^\textrm{th}$ simple transposition in the symmetric group $S_d$, acting on the word $\bw$ by swapping the letters in the $k^\textrm{th}$ and $(k+1)^\textrm{st}$ positions.
If $\Gamma$ is a Dynkin-type quiver, we will say that $R_\alpha$ is a KLR algebra of that type.

We impose a $\mathbb Z$-grading on $R_\alpha$ by
\begin{align} \label{eqn-degree}
 & \deg(e(\bw))  =  0, \quad
 \deg(y_i) = 2, \\  & \deg(\psi_i e(\bw))  =   \left\{\begin{array}{rl}
 -2 & \hbox{ if } w_i= w_{i+1},\\
 1 & \hbox{ if } w_i, w_{i+1} \hbox{ are neighbors in } \Gamma ,\\
 0 & \hbox{ if } w_i, w_{i+1} \hbox{ are not neighbors in } \Gamma .\\
\end{array}\right. \label{eqn-degree-1}
\end{align}

Set $R = \bigoplus_{\alpha\in Q_+} R_\alpha$, and let $\text{Rep}(R)$ be the category of finite dimensional graded $R$-modules, and denote its Grothendieck group by $[\text{Rep}(R)]$. Then $\text{Rep}(R)$ categorifies one half of the quantum group. More precisely, let $\mathbf f$ and $'\mathbf f$ be the Lusztig's algebras defined in \cite[Section 1.2]{lusztig2011introduction} attached to the Cartan datum encoded in the quiver $\Gamma$ over the field $\mathbb Q(v)$. We put $q=v^{-1}$ and $\mathcal A=\mathbb Z[q, q^{-1}]$, and let $'\mathbf f_{\mathcal A}$ and $\mathbf f_{\mathcal A}$ be the $\mathcal A$-forms of $'\mathbf f$ and $\mathbf f$, respectively. Consider the graded duals $'\mathbf f^*$ and $\mathbf f^*$, and their $\mathcal A$-forms \['\mathbf f^*_{\mathcal A} := \{ x \in {'\mathbf f}^* : x('\mathbf f_{\mathcal A}) \subset \mathcal A\} \ \text{ and } \ \mathbf f^*_{\mathcal A} := \{ x \in \mathbf f^* : x(\mathbf f_{\mathcal A}) \subset \mathcal A\}.\]
Then Khovanov and Lauda  \cite{Khov2009} prove that 
there is an $\mathcal A$-linear (bialgebra) isomorphism $[\text{Rep}(R)]  \xrightarrow{\sim} \mathbf f^*_{\mathcal A}$.
 More details  can be found in \cite{Khov2009, Klesh2010}.

\medskip

A word $\mathbf i \in \la I \ra_\alpha$ is naturally considered as an element of $'\mathbf f_{\mathcal A}^*$ to be dual to the corresponding monomial in $'\mathbf f_{\mathcal A}$.
Let $M$ be a finite dimensional graded $R_\alpha$-module. Define the $q$-character of $M$ by
\[ \text{ch}_q \, M := \sum_{\mathbf i \in \la I \ra_\alpha} (\dim_q M_{\mathbf i} )\, \mathbf i \in {'\mathbf f}^*_{\mathcal A} ,\] where $M_{\mathbf i} = e(\mathbf i) M$ and $\dim_q V:= \sum_{n \in \mathbb Z} (\dim V_n) \, q^n \in \mathcal A$ for $V=\oplus_{n \in \mathbb Z} V_n$.
A non-empty word $\mathbf i$ is called {\em Lyndon} if it is lexicographically smaller than all its proper right factors with respect to a fixed ordering on $I$.
For $x \in {'\mathbf f}^*$ we denote by $\max(x)$ the largest word appearing in $x$. A word $\mathbf i \in \langle I \rangle$ is called {\em good} if there is $x \in \mathbf f^*$ such that $\mathbf i = \max(x)$. Given a module $L \in \text{Rep}(R_\alpha)$, we say that $\mathbf i \in \langle I \rangle$ is the {\em highest weight} of $L$ if $\mathbf i = \max(\text{ch}_q \, L)$. An irreducible  module 
$L \in \text{Rep}(R_\alpha)$ is called {\em cuspidal} if its highest weight is a good Lyndon word.

The following theorem explains the importance of cuspidal representations as building blocks for all irreducible representations of $R_\alpha$. 
\begin{thm}[\cite{Klesh2010}; \cite{Hill2012}, 4.1.1] Assume that $\Gamma$ is of finite Dynkin type. Then any irreducible graded $R_\alpha$-module  for $\alpha \in Q_+$ is given by an  irreducible head of a standard representation induced from cuspidal representations up to isomorphism and degree shift.
\end{thm}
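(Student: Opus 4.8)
The plan is to classify the irreducible modules by their highest weights and then realize each as the simple head of an appropriate induction product of cuspidals. First I would pass through the categorification isomorphism $[\text{Rep}(R)] \xrightarrow{\sim} \mathbf f^*_{\mathcal A}$ recalled above. Under this identification the classes of irreducible modules (up to degree shift) correspond to the dual canonical basis of $\mathbf f^*_{\mathcal A}$, and the assignment $L \mapsto \max(\text{ch}_q \, L)$ gives a bijection between the irreducibles and the set of good words in $\la I \ra_\alpha$: each good word is the highest weight of exactly one irreducible, and distinct irreducibles have distinct highest weights. This reduces the theorem to producing, for each good word $\mathbf i$, an induction product of cuspidals whose simple head has highest weight $\mathbf i$.

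The combinatorial input is the Lyndon factorization. I would invoke the Lyndon-word combinatorics developed by Lalonde--Ram and Leclerc, namely that every good word $\mathbf i$ admits a unique factorization $\mathbf i = \mathbf i_1 \mathbf i_2 \cdots \mathbf i_r$ into good Lyndon words with $\mathbf i_1 \geq \mathbf i_2 \geq \cdots \geq \mathbf i_r$ lexicographically. In finite Dynkin type the good Lyndon words biject with the positive roots, so there are only finitely many, and to each $\mathbf i_j$ there is (by the cited construction of Kleshchev--Ram and Hill--Melvin--Mondragon) a cuspidal module $L_{\mathbf i_j}$ with $\max(\text{ch}_q \, L_{\mathbf i_j}) = \mathbf i_j$ occurring with multiplicity one. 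I would then form the standard module $\Delta(\mathbf i) := L_{\mathbf i_1} \circ \cdots \circ L_{\mathbf i_r}$, induced from the external tensor product $L_{\mathbf i_1} \boxtimes \cdots \boxtimes L_{\mathbf i_r}$ along the natural embedding of $R_{\alpha_1} \otimes \cdots \otimes R_{\alpha_r}$ into $R_\alpha$, where $\alpha_j$ is the weight of $\mathbf i_j$.

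The heart of the proof is a character computation. The $q$-character of an induced module is the quantum shuffle product of the $q$-characters of its factors, so $\text{ch}_q \, \Delta(\mathbf i)$ is the shuffle of the $\text{ch}_q \, L_{\mathbf i_j}$. I would show that the lexicographically largest word occurring in this shuffle is exactly the concatenation $\mathbf i = \mathbf i_1 \cdots \mathbf i_r$, and that it occurs with multiplicity one up to a power of $q$. The point is that each factor contributes its maximal word $\mathbf i_j$ once, and because the $\mathbf i_j$ appear in non-increasing order, any nontrivial interleaving of letters drawn from different factors produces a strictly smaller word; the only shuffle term realizing the concatenation is the trivial one. This dominance estimate --- controlling all shuffle terms against the concatenation and pinning down the coefficient --- is the main obstacle, since it requires careful bookkeeping of how lexicographic order interacts with quantum shuffles of Lyndon words.

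Finally I would conclude. Since $\mathbf i = \max(\text{ch}_q \, \Delta(\mathbf i))$ occurs with multiplicity one, the module $\Delta(\mathbf i)$ is generated by a highest-weight vector and admits a unique simple quotient $L(\mathbf i)$ with $\max(\text{ch}_q \, L(\mathbf i)) = \mathbf i$: no proper submodule can meet the $\mathbf i$-weight space, so the sum of all proper submodules is itself proper, forcing a simple head. By the bijection of the first step, $L(\mathbf i)$ is precisely the irreducible with highest weight $\mathbf i$. Letting $\mathbf i$ range over all good words then exhibits every irreducible $R_\alpha$-module as the irreducible head of a standard module induced from cuspidals, up to isomorphism and degree shift, as required.
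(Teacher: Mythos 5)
This theorem is not actually proved in the paper: it is quoted as a known result from the cited sources (Kleshchev--Ram \cite{Klesh2010} and Hill--Melvin--Mondragon \cite{Hill2012}), so there is no internal argument to compare yours against. What you have written is essentially a reconstruction of the Kleshchev--Ram proof itself: highest weights of irreducibles are good words, every good word factors uniquely as a lexicographically non-increasing product of good Lyndon words, cuspidals are attached to good Lyndon words, the standard module is the induction product of the corresponding cuspidals, and the shuffle-product dominance argument shows that the concatenation is the maximal word of $\text{ch}_q\,\Delta(\mathbf i)$, occurring with multiplicity one, which forces an irreducible head $L(\mathbf i)$ of highest weight $\mathbf i$. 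That is the correct architecture, and your identification of the dominance estimate as the technical heart is accurate.

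One step, however, is both unjustified as stated and unnecessary: you open by claiming that under $[\text{Rep}(R)]\xrightarrow{\sim}\mathbf f^*_{\mathcal A}$ the classes of irreducibles correspond to the dual canonical basis, and you use this to assert the bijection between irreducibles and good words \emph{before} constructing anything. The dual canonical basis identification is not available here --- the ground field $\mathbb F$ is arbitrary, and that statement can fail in positive characteristic --- and, more importantly, the bijection is not an input extractable from the categorification isomorphism alone; in Kleshchev--Ram it is an \emph{output}. The repair is standard. Categorification does give directly that $\text{ch}_q\,L\in\mathbf f^*_{\mathcal A}$, hence every irreducible has a good highest weight. Your construction then produces, for each good word $\mathbf i$ of weight $\alpha$, an irreducible $L(\mathbf i)$ with $\max(\text{ch}_q\,L(\mathbf i))=\mathbf i$, and these are pairwise non-isomorphic since their characters have distinct leading words. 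Finally one counts: the number of good words of weight $\alpha$ equals the dimension of the weight-$\alpha$ component of $\mathbf f^*$ (the set of leading words of a finite-dimensional space has cardinality equal to its dimension), which by categorification equals the rank of $[\text{Rep}(R_\alpha)]$, i.e.\ the number of irreducibles up to isomorphism and degree shift. Hence the modules $L(\mathbf i)$ exhaust all irreducibles, which is exactly the claim of the theorem. With that substitution in place of your first paragraph, your argument is the proof given in the literature that the paper cites.
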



\subsection{Homogeneous representations}
  We define a \textit{homogeneous representation} of a KLR algebra to be an irreducible, graded representation fixed in a single degree (with respect to the $\mathbb Z$-grading described in  \eqref{eqn-degree} and \eqref{eqn-degree-1}).   
Homogeneous representations form an important class of irreducible modules since most of the cuspidal representations are  homogeneous  with a suitable choice of ordering on $\la I \ra$ (\cite{Klesh2010, Hill2012}). After introducing some terminology, we will  describe these representations in a combinatorial way. We continue to assume that $\Gamma$ is a simply-laced quiver.

  Fix an $\alpha\in Q_+$ and let $d$ be the height of $\alpha$. For any word $\bw\in \la I \ra_\alpha$, we say that the simple transposition $s_r\in S_d$ is an \textit{admissible transposition for $\bw$} if the letters $w_r$ and $w_{r+1}$ are neither equal nor neighbors in the quiver $\Gamma$.  Following Kleshchev and Ram \cite{Klesh2008}, we define the \textit{weight graph} $G_\alpha$ with vertices given by $\la I \ra_\alpha$.  Two words $\bw$, $\bv \in \la I \ra_\alpha$ are connected by an edge if there is an admissible transposition $s_r$ such that $s_r\bw = \bv$.

We say that a connected component $C$ of the weight graph $G_\alpha$ is \emph{homogeneous} if the following property holds for every $\bw\in C$:
  \begin{eqnarray} \label{homog}
 &   &  \hbox{If } w_r  =  w_s  \hbox{ for some } 1\leq r<s \leq d \hbox{, then there exist } t,u\\\nonumber
    & & \hspace*{1.8 cm} \hbox{ with } r<t<u<s \hbox{ such that }  w_r \hbox{ is neighbors with both } w_t \hbox{ and } w_u.
  \end{eqnarray}
A word satisfying condition~\eqref{homog} will be called a \textit{homogeneous word}.

 A main theorem of \cite{Klesh2008} shows that the homogeneous components of $G_\alpha$ exactly parameterize the homogeneous representations of the KLR algebra $R_\alpha$:

 \begin{thm}[\cite{Klesh2008}, Theorem 3.4] \label{com} Let $C$ be a homogeneous component of the weight graph $G_\alpha$.  Define an $\mathbb F$-vector space $S(C)$ with basis $\{v_{\bw}~|~\bw\in C \}$ labeled by the vertices in $C$.  Then we have an $R_\alpha$-action on $S(C)$ given by
  \begin{eqnarray*}
  	e(\bw')v_{\bw} & = & \delta_{\bw,\bw'}v_{\bw} \quad (\bw'\in \la I \ra_\alpha, \bw\in C), \\
  	y_rv_{\bw} &=& 0 \quad (1\leq r \leq d, \bw\in C), \\
  	\psi_rv_{\bw} & = &
  	  \left\{\begin{array}{ll}
  	  	v_{s_r\bw} & \textrm{ if } s_r\bw\in C \\
  	  	0 & \textrm{otherwise} 
  	  \end{array}\right.   \quad (1\leq r \leq d-1, \bw\in C), 
  \end{eqnarray*}
  which gives $S(C)$ the structure of a homogeneous, irreducible $R_\alpha$-module.  Further $S(C)\ncong S(C')$ if $C\neq C'$, and this construction gives all of the irreducible homogeneous modules, up to isomorphism.
 \end{thm}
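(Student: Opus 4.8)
The plan is to split the statement into four tasks: (i) the listed formulas respect the defining relations of $R_\alpha$, so that $S(C)$ really is a module; (ii) $S(C)$ lives in a single degree and is irreducible; (iii) distinct components give non-isomorphic modules; and (iv) every irreducible homogeneous module arises this way. The single combinatorial fact that drives everything is that a homogeneous word $\bw$ (one satisfying \eqref{homog}) admits no repetition at distance $1$ or $2$: if $w_r=w_s$ then \eqref{homog} requires indices $r<t<u<s$, which is impossible when $s-r\le 2$. Thus for every $\bw\in C$ we have $w_r\neq w_{r+1}$ and $w_r\neq w_{r+2}$ for all $r$, and these two constraints are what make the dangerous cases of the relations disappear.

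First I would verify the relations. Since each $y_r$ acts as $0$, every relation containing a $y$ reduces to $0=0$, except the two whose right-hand side is $e(\bw)$ when $w_k=w_{k+1}$; these are vacuous because no $\bw\in C$ has $w_k=w_{k+1}$, and $e(\bw)=0$ on $S(C)$ for $\bw\notin C$. The idempotent relations and $\psi_ke(\bw)=e(s_k\bw)\psi_k$ are immediate from the definitions. For $\psi_k^2$: if $w_k,w_{k+1}$ are neighbors then $s_k$ is not admissible and $\psi_kv_{\bw}=0$, matching the vanishing of $(y_k-y_{k+1})e(\bw)$; if they are unequal non-neighbors then $s_k$ is admissible and $\psi_k^2v_{\bw}=v_{s_ks_k\bw}=v_{\bw}$, matching the value $e(\bw)$. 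The commutation $\psi_k\psi_\ell=\psi_\ell\psi_k$ for $|k-\ell|>1$ holds because disjoint transpositions affect neither each other's admissibility nor each other's effect. The braid relation is the only laborious point: since $w_k\neq w_{k+2}$ for $\bw\in C$, its right-hand side is always $0$, so I must show $\psi_{k+1}\psi_k\psi_{k+1}v_{\bw}=\psi_k\psi_{k+1}\psi_kv_{\bw}$, which I would settle by a short case analysis on which of the pairs $(w_k,w_{k+1})$, $(w_{k+1},w_{k+2})$, $(w_k,w_{k+2})$ are neighbors; in every case both sides vanish simultaneously, or both equal $v_{\bw'}$ where $\bw'$ reverses the three letters $w_k,w_{k+1},w_{k+2}$.

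Next, homogeneity, irreducibility, and distinctness. Each nonzero transition $\psi_rv_{\bw}=v_{s_r\bw}$ occurs only at an admissible pair, where $\deg\psi_re(\bw)=0$; together with the vanishing of the $y_r$ this forces all $v_{\bw}$ into one degree, so $S(C)$ is homogeneous. For irreducibility, let $0\neq N\subseteq S(C)$ and $0\neq v\in N$; applying $e(\bw)$ extracts a single coefficient, so some $v_{\bw}\in N$, and since $C$ is connected through admissible transpositions, repeated application of the corresponding $\psi_r$ reaches every $v_{\bv}$, giving $N=S(C)$. Distinctness is immediate because $\{\bw: e(\bw)S(C)\neq 0\}=C$ is an isomorphism invariant, so $S(C)\cong S(C')$ forces $C=C'$.

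The hardest part is completeness. Given an irreducible homogeneous $L$, degree reasons force $y_r=0$ and $\psi_re(\bw)=0$ unless $w_r,w_{r+1}$ are unequal non-neighbors (the other cases have degree $-2$ or $1$); at admissible pairs $\psi_r^2e(\bw)=e(\bw)$ makes $\psi_r$ an isomorphism $e(\bw)L\xrightarrow{\sim}e(s_r\bw)L$. I would then show every $\bw$ with $e(\bw)L\neq 0$ is homogeneous: if $\bw$ violates \eqref{homog}, take a closest repetition $w_r=w_s=i$, so at most one neighbor of $i$ lies strictly between; sliding the two copies of $i$ toward that neighbor by admissible transpositions produces, in the same support, either an adjacent repeat or a pattern $i\,j\,i$ with $j$ a neighbor of $i$, and the first forces the word-space to vanish through the $y$-relations while the second does so through the braid relation, a contradiction. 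Hence the support consists of homogeneous words, is closed under admissible transpositions, and—since $\bigoplus_{\bw\in C'}e(\bw)L$ over any single component $C'$ is a submodule—is a single component $C$ by irreducibility. Finally I would build a nonzero map $S(C)\to L$ sending $v_{\bw_0}$ to a chosen vector of $e(\bw_0)L$ and $v_{\bw}$ to its image under a path of $\psi$'s; it is a homomorphism, hence injective because $S(C)$ is irreducible and surjective because $L$ is, so $\dim e(\bw)L=1$ and $L\cong S(C)$. I expect the well-definedness of this last map and the sliding argument to be the genuine obstacles: well-definedness is precisely where the fully commutative nature of $C$ enters, since two reduced paths differ only by the commutation moves $\psi_k\psi_\ell=\psi_\ell\psi_k$ that hold on $L$, whereas the relation-checking, though it needs the braid case analysis, is routine once the distance-$1$ and distance-$2$ constraints from \eqref{homog} are available.
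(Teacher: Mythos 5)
First, a point of reference: this paper does not prove Theorem \ref{com} at all --- it is quoted from Kleshchev--Ram \cite{Klesh2008} (their Theorem 3.4), so there is no in-paper proof to compare against. Measured against the argument in that reference, your proposal reproduces the correct overall architecture: the observation that \eqref{homog} forbids equal letters at distance $1$ or $2$, the relation check with $y_r$ acting by zero, irreducibility via the idempotents plus connectivity of $C$, distinctness via supports, and, for completeness, the degree argument ($\deg y_r=2$, $\deg \psi_re(\bw)\in\{-2,1,0\}$) together with the sliding argument that kills $e(\bw)L$ for non-homogeneous $\bw$ (adjacent repetition dies by the $y$-relations, the pattern $i,j,i$ with $j$ a neighbor dies by the braid relation). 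All of that is right.

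The genuine gap is exactly where you flagged it: well-definedness of $v_{\bw}\mapsto\psi_{\mathrm{path}}\,x$. Your justification --- ``two reduced paths differ only by the commutation moves $\psi_k\psi_\ell=\psi_\ell\psi_k$'' --- conflates \emph{connectivity} of $C$ (any two words in $C$ are joined by some chain of admissible transpositions, which is what full commutativity gives via Lemma \ref{lem-equiv}) with \emph{triviality of closed loops} (every closed path in $C$ must act as the identity on $e(\bw_0)L$), and the latter is what well-definedness actually requires. Moreover the loops are not consequences of the disjoint commutations alone: for $\alpha=\alpha_1+\alpha_3+\alpha_5$ in type $A_5$, the component of $[1,3,5]$ is a hexagon, and the two paths $\psi_1\psi_2\psi_1$ and $\psi_2\psi_1\psi_2$ from $[1,3,5]$ to $[5,3,1]$ overlap in position, so their agreement on $L$ needs the braid relation $(\psi_2\psi_1\psi_2-\psi_1\psi_2\psi_1)e(\bw)=0$ (valid since $w_1\neq w_3$), not a commutation. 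So to complete your route you must prove that every cycle in a homogeneous component decomposes into backtrackings $\psi_r^2e(\bw)=e(\bw)$, disjoint-commutation squares, and such braid hexagons --- a true but nontrivial combinatorial fact about the graph of commutation classes (equivalently, of linear extensions of the heap) that you neither state nor prove. A clean way to bypass all path bookkeeping: once you know $\mathrm{supp}(L)=C$ and that the path isomorphisms give all $e(\bv)L$, $\bv\in C$, a common dimension $m$, you have $\mathrm{ch}_q\,L= m\,q^{d}\sum_{\bv\in C}\bv = m\,q^d\,\mathrm{ch}_q\,S(C)$, and linear independence of characters of pairwise non-isomorphic irreducible $R_\alpha$-modules (Khovanov--Lauda) forces $m=1$ and $L\cong S(C)$ up to degree shift. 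A second, much smaller gap: in the relation check you identify ``$s_r\bw\in C$'' with ``$s_r$ admissible for $\bw$,'' but the action in the theorem is defined by membership in $C$; one must rule out that a non-admissible swap lands back in $C$, which follows because all words of $C$ are reduced expressions of one element of $W$ while swapping adjacent non-commuting generators changes that element.
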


As a result, the task of identifying homogeneous representations of a KLR algebra is reduced to identifying homogeneous components in a weight graph.  This is simplified further by the following lemma:

  \begin{lemma}[\cite{Klesh2008}, Lemma 3.3] A connected component $C$ of the weight graph $G_\alpha$ is homogeneous if and only if an element $\bw\in C$ satisfies the condition~\eqref{homog}.
  \end{lemma}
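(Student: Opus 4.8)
The plan is to prove the equivalence by showing that the homogeneity condition \eqref{homog}, although stated as a property of \emph{every} word in a connected component $C$, is in fact preserved under admissible transpositions. The forward direction is trivial: if $C$ is homogeneous then by definition every $\bw \in C$ satisfies \eqref{homog}, so in particular some chosen element does. The content is in the converse, so I would fix a single $\bw \in C$ satisfying \eqref{homog} and propagate the property to all of $C$. Since $C$ is connected, it suffices to show that if $\bw$ satisfies \eqref{homog} and $\bv = s_r \bw$ for an admissible transposition $s_r$, then $\bv$ also satisfies \eqref{homog}; the general case follows by induction on the length of a path in $G_\alpha$ from $\bw$ to $\bv$.

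So the heart of the argument is the single-swap step. Here I would set $\bv = s_r\bw$, meaning $v_r = w_{r+1}$, $v_{r+1} = w_r$, and $v_i = w_i$ for all other $i$, where by admissibility $w_r$ and $w_{r+1}$ are neither equal nor neighbors. I would then take any pair of positions $p < q$ in $\bv$ with $v_p = v_q$ and produce the required intermediate indices $t, u$ with $p < t < u < q$ such that $v_p$ is a neighbor of both $v_t$ and $v_u$. The natural strategy is to pull back to $\bw$: apply $s_r$ to the positions $p, q$ to locate the corresponding equal pair in $\bw$, invoke \eqref{homog} for $\bw$ to obtain witnesses $t', u'$, and then push these witnesses forward through $s_r$ to get $t, u$ for $\bv$. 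The key observations that make this work are that $s_r$ only touches positions $r$ and $r+1$, and that the neighbor relation in $\Gamma$ is symmetric and insensitive to position; so as long as the interval $(p,q)$ and the witnesses $t', u'$ avoid being disrupted by the swap, they transfer directly.

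The main obstacle is the bookkeeping in the boundary cases where one of $p, q, t', u'$ coincides with $r$ or $r+1$, since these are exactly the positions the transposition disturbs. I would organize these by cases according to whether the swapped block $\{r, r+1\}$ lies entirely inside, entirely outside, or straddles the interval $[p, q]$. When $\{r, r+1\}$ is disjoint from the relevant positions, the witnesses transfer unchanged. When the pair $v_p = v_q$ involves a swapped letter, the crucial point is that admissibility guarantees $w_r$ and $w_{r+1}$ are \emph{non-neighbors}, which must be reconciled with the neighbor requirement in \eqref{homog}: I expect that a swapped letter can never serve as or force a witness in a way that breaks the condition, precisely because admissibility forbids the kind of adjacency that \eqref{homog} demands. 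The delicate subcase is when exactly one of $r, r+1$ lies strictly between $p$ and $q$, where I must check that after swapping, a valid witness still exists inside the (possibly shifted) interval. Careful tracking of how the two disturbed positions move relative to a fixed pair $p < q$ should close all cases, and I would present the generic case in full and indicate that the boundary cases follow by the same symmetry-and-locality argument.
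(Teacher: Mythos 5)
The paper does not actually prove this lemma: it is imported verbatim from Kleshchev--Ram \cite{Klesh2008} (their Lemma 3.3), so there is no in-paper proof to compare against. Your proposal is the standard argument --- the one used in the cited source --- and it is correct: reduce to a single admissible transposition by connectedness, then transfer witnesses. Two remarks tighten your ``delicate subcase'' into a complete proof. First, there is no genuine straddling case: since $r$ and $r+1$ are consecutive positions, if neither equals $p$ nor $q$ then the pair $\{r,r+1\}$ lies either entirely inside the open interval between $p$ and $q$ or entirely outside the closed one; when it lies inside, the multiset of letters strictly between $p$ and $q$ is unchanged by the swap, so two witnessing positions still exist (a witness at position $r$ just moves to $r+1$ or vice versa). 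Also, nothing ``shifts'': $s_r$ permutes letters, not positions, so all intervals are literally the same. Second, the only cases needing admissibility are those where one of $p,q$ is a swapped position, say $p=r+1$ with $v_p=w_r$: condition \eqref{homog} applied to the pair $w_r=w_q$ in $\bw$ gives witnesses $t',u'$ with $r<t'<u'<q$, and admissibility (that $w_{r+1}$ is not a neighbor of $w_r$) rules out $t'=r+1$, forcing both witnesses into the smaller interval $(r+1,q)$, where they are untouched by the swap and transfer verbatim; the cases $p=r$, $q=r$, $q=r+1$ are symmetric (in two of them the interval in $\bw$ is even larger than needed, so nothing is required). With these observations every case closes, and your sketch is a complete and correct proof.
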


 Recall that we call a word satisfying condition~\eqref{homog} a {homogeneous word}.  The homogeneous words have other combinatorial characterizations, which we explore in the next subsection.


\subsection{Fully commutative elements of Coxeter groups}

Since the homogeneity of $\bw \in \la I \ra$ does not depend on the  orientation of a quiver,  it is enough to consider Dynkin diagrams and the corresponding Coxeter groups.
Given a simply laced Dynkin diagram,  the corresponding Coxeter group will be denoted by $W$ and the generators by $s_i$, $i \in I$. A reduced expression $s_{i_1} \cdots s_{i_r}$ will be identified with the word $[i_1,  \dots, i_r]$ in  $ \la I \ra$. The identity element will be identified with the empty word $[~]$.

An element $\bw \in W$ is said to be {\em fully commutative} if any reduced word for $\bw$ can be obtained from any other  by  interchanges of adjacent commuting generators, or equivalently if no reduced word for $\bw$ has $[i, i', i]$ as a subword where $i$ and $i'$ are neighbors in the Dynkin diagram. Now we have the following lemma, which was first observed by  Kleshchev and Ram.

\begin{lemma} \cite{Klesh2008} \label{lem-equiv}
\hfill
\begin{enumerate}
\item A homogeneous component of the weight graph  $G_\alpha$ contains as its vertices exactly the set of reduced expressions for a fully commutative element in $W$. 

\item The set of homogeneous components is in bijection with the set of fully commutative elements in $W$.

\end{enumerate}

\end{lemma}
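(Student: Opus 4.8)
The plan is to reduce both parts to a single dictionary statement: a word $\bw \in \la I \ra_\alpha$ is homogeneous (satisfies \eqref{homog}) if and only if it is a reduced expression for a fully commutative element of $W$. First I record the basic translation: an admissible transposition $s_r$ swaps two adjacent letters $w_r, w_{r+1}$ that are neither equal nor neighbors, and in a simply-laced $W$ this is precisely the condition $s_{w_r} s_{w_{r+1}} = s_{w_{r+1}} s_{w_r}$; conversely every commutation of adjacent generators is an admissible transposition. Hence, writing $\pi(\bw) = s_{w_1}\cdots s_{w_d}$, both the element $\pi(\bw)$ and the number of letters $d$ are unchanged along edges of $G_\alpha$ — so reducedness is constant on each component — and the edges of $G_\alpha$ are exactly the commutation moves. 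Throughout I will use the classical fact (Tits) that a word in the generators is non-reduced if and only if it can be carried, by braid and commutation moves, to a word containing a factor $[i,i]$, together with the theorem that any two reduced words of the same element are connected by braid and commutation moves.

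For the easy direction, suppose $\bw$ is a reduced word for a fully commutative $w$ and $w_r = w_s = i$ with $r < s$; I may assume the pair is consecutive, i.e. no $i$ occurs strictly between them. If fewer than two neighbors of $i$ occurred among $w_{r+1},\dots,w_{s-1}$, then all but at most one of these letters commute with $s_i$, and commuting the two copies of $s_i$ inward produces a word commutation-equivalent to $\bw$ — hence itself reduced — containing either a factor $[i,i]$ (impossible in a reduced word) or a factor $[i,i',i]$ with $i,i'$ neighbors (contradicting full commutativity). Thus at least two neighbors of $i$ appear between $r$ and $s$, and \eqref{homog} holds; an innermost consecutive pair supplies the required $t,u$ for an arbitrary pair. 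So $\bw$ is homogeneous.

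For the converse, which is the heart of the matter, suppose $\bw$ is homogeneous. By Kleshchev--Ram (\cite{Klesh2008}, Lemma 3.3) every word in the component $C$ of $\bw$ satisfies \eqref{homog}; in particular no word in $C$ contains a factor $[i,i]$ or a factor $[i,i',i]$ with $i,i'$ neighbors, since either would violate \eqref{homog} for a pair of positions at distance $1$ or $2$. Now $C$ is exactly the commutation class of $\bw$, so no word obtainable from $\bw$ by commutation moves has a short-braid factor $[i,i',i]$. The only non-commutation braid moves in a simply-laced group are short-braid moves, which require such a factor to be present; therefore no braid move can be applied to any word in $C$. I first conclude that $\bw$ is reduced: were it not, Tits' criterion would give a sequence of braid and commutation moves producing a factor $[i,i]$, but commutation moves keep us inside $C$ (where no $[i,i]$ occurs) and the first genuine braid move is blocked. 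Reducedness then propagates to all of $C$, and since the reduced words of $\pi(\bw)$ form a single graph under braid and commutation moves while no braid move escapes $C$, the class $C$ is the \emph{entire} set of reduced words of $\pi(\bw)$, which is therefore fully commutative. I expect this blocking argument — the interplay of Lemma 3.3 with Tits' theorem to rule out both non-reducedness and the existence of a second commutation class — to be the main obstacle.

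With the dictionary in hand both parts follow. For (1), given a homogeneous component $C$, pick $\bw \in C$; by the above $w := \pi(\bw)$ is fully commutative and every vertex of $C$ is one of its reduced words, while conversely every reduced word of $w$ is commutation-equivalent to $\bw$ and hence a vertex of $C$, so $C$ is exactly the set of reduced expressions for $w$. For (2), the assignment $C \mapsto \pi(C)$ is well defined and lands in the fully commutative elements; it is injective because two components mapping to the same $w$ both coincide with the full set of reduced words of $w$ by (1), and surjective because, by the dictionary together with Lemma 3.3, the reduced words of any fully commutative element constitute a single homogeneous component. This yields the desired bijection.
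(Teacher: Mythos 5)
Your proof is correct, but note that the paper itself offers no argument for this lemma at all: it is quoted verbatim from Kleshchev--Ram \cite{Klesh2008}, so there is no internal proof to compare against. What you have written is a sound reconstruction of the underlying dictionary ``homogeneous word $\Leftrightarrow$ reduced word of a fully commutative element,'' and it hangs together: the identification of edges of $G_\alpha$ with commutation moves is right in the simply-laced setting; the easy direction correctly reduces to an innermost pair of equal letters and uses that a word commutation-equivalent to a reduced word is reduced; and the converse correctly combines the quoted Lemma (that one homogeneous vertex makes the whole component homogeneous) with Tits' word theorem, using the observation that homogeneity of every word in $C$ forbids both $[i,i]$ and $[i,i',i]$ factors, so neither a deletion-producing braid sequence nor a short-braid move can ever leave $C$. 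That blocking argument is exactly the crux, and it simultaneously yields reducedness of $\bw$, the equality of $C$ with the full set of reduced expressions of $\pi(\bw)$, and full commutativity of $\pi(\bw)$; parts (1) and (2) then follow formally, including surjectivity in (2) via the easy direction. The only caveats worth flagging are bookkeeping ones: you invoke two external ingredients --- Kleshchev--Ram's Lemma 3.3 (which this paper does quote) and the standard fact that a non-reduced word can be carried by braid moves alone to one containing a factor $[i,i]$ --- so your proof is ``self-contained modulo standard Coxeter theory,'' which is essentially the same level of rigor as the original source; in exchange, the reader gets an actual argument where the paper gives only a citation.
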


Stembridge \cite{Stembridge1996} classified all of the Coxeter groups that have finitely many fully commutative elements, completing the work of Fan \cite{Fan1996}, who had done this for the simply-laced types.
Fan and Stembridge also enumerated the set of fully commutative elements.  In particular, they showed that  the number of fully commutative elements in the Coxeter group of type $A_n$ is $C_{n+1}$, where $C_n$ be the $n^\textrm{th}$ Catalan number, i.e. $C_n = \frac{1}{n+1}{2n\choose n}$. This fact has an immediate implication on homogeneous representations by Lemma \ref{lem-equiv}.
\begin{cor}
 A KLR algebra $R = \bigoplus_{\alpha\in Q_+}R_\alpha$ of type $A_n$ has $C_{n+1}$ irreducible homogeneous representations.
\end{cor}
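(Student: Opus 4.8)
The plan is to combine the two facts already assembled in the excerpt. By Lemma~\ref{lem-equiv}(2), the set of homogeneous components of the weight graph $G_\alpha$ is in bijection with the set of fully commutative elements of the Coxeter group $W$ of type $A_n$. By Theorem~\ref{com}, each homogeneous component $C$ gives rise to a distinct irreducible homogeneous $R_\alpha$-module $S(C)$, and every irreducible homogeneous module arises this way; hence homogeneous components are in bijection with isomorphism classes of irreducible homogeneous representations of $R_\alpha$. Composing these two bijections, the irreducible homogeneous representations of $R_\alpha$ are in bijection with the fully commutative elements of $W$ corresponding to the fixed $\alpha$.

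Next I would account for the direct sum over all $\alpha \in Q_+$. Each word $\bw \in \la I \ra$ lies in exactly one $\la I \ra_\alpha$ (the one recording the letter multiplicities), so the weight graphs $G_\alpha$ for distinct $\alpha$ are disjoint, and a homogeneous component lives in a single $G_\alpha$. Summing the count over $\alpha$ therefore simply counts all fully commutative elements of $W$ without overlap. The bijection of Lemma~\ref{lem-equiv} is stated for $W$ as a whole, so no double-counting occurs: the total number of irreducible homogeneous representations of $R = \bigoplus_{\alpha \in Q_+} R_\alpha$ equals the total number of fully commutative elements in $W$.

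Finally I would invoke the enumeration result of Fan and Stembridge quoted immediately before the statement: the number of fully commutative elements in the Coxeter group of type $A_n$ is the Catalan number $C_{n+1}$. Substituting this into the bijection above yields that $R$ has exactly $C_{n+1}$ irreducible homogeneous representations, which is the claim. The argument is essentially a concatenation of previously established bijections together with a cited count, so there is no substantive obstacle; the only point requiring a moment's care is confirming that passing from a single $R_\alpha$ to the full $R = \bigoplus_\alpha R_\alpha$ corresponds exactly to passing from fully commutative elements of fixed content to all of $W$, which follows from the disjointness of the weight graphs noted above.
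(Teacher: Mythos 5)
Your proposal is correct and follows exactly the paper's route: the paper derives this corollary immediately from the Fan--Stembridge count of $C_{n+1}$ fully commutative elements in type $A_n$, combined with Lemma~\ref{lem-equiv} and Theorem~\ref{com}. Your additional care about disjointness of the weight graphs $G_\alpha$ across different $\alpha$ is a reasonable elaboration of a point the paper leaves implicit, but it does not change the argument.
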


In \cite{Klesh2008}, Kleshchev and Ram parameterized homogeneous representations using skew shapes. In this paper, we will decompose the set of fully commutative elements to give a finer enumeration of homogeneous representations in type $A_n$. 
More precisely, in the next section, our main result is a fine bijection between the family of irreducible homogeneous representations and the set of {\em Dyck paths} in accordance with the decomposition of the set of fully commutative elements.  This bijection can be used to quickly enumerate the fully commutative elements of a given length and the attached homogeneous representations.


\section{Homogeneous Representations of Type $A_n$ KLR Algebras} \label{ch:A}
 In this section, we describe all of the homogeneous representations of a KLR algebra of type $A_n$, associated with a quiver whose underlying graph is
\[
 \begin{tikzpicture}[x=2cm, scale=.5]
    \foreach \x in {0,1, 3}
    \draw[xshift=\x,thick, fill=black] (\x,0) circle (1.5 mm);
    \draw (0,0)--(1,0); 
\draw (2,0)--(3,0);
    \draw[dotted,thick] (1,0) -- (2,0);
    \draw (0,-.3) node[below]{\footnotesize1};
 \draw (1,-.3) node[below]{\footnotesize2};
    \draw (3,-.3) node[below]{\footnotesize $n$};
  \end{tikzpicture} 
  \]
  We begin by introducing the main combinatorial tool for our study.

\subsection{Dyck paths}
As in \cite{Deutsch1999}, we define a {\em Dyck path} as a lattice path in the first quadrant consisting of steps $\langle1,1\rangle$ (north-east) and $\langle1,-1\rangle$ (south-east), beginning at the origin and ending at the point $(2n,0)$.  We refer to $n$ as the semi-length of the path.  By a \textit{peak} we shall mean a rise $\langle 1,1 \rangle$ followed by a fall $\langle 1,-1 \rangle$, while a \textit{valley} is a fall, followed by a rise.

\begin{figure}[h]
\[
\begin{tikzpicture}[x=1.05cm,y=1.05cm, scale=0.7]
\foreach \x in {0,2,4,6,8,10}
\draw[shift={(\x-1,0)},color=black, ] (0pt,2pt) -- (0pt,-2pt) node[below] {\footnotesize $(\x,0)$};
\draw[line width=1 pt, ->] (-1,0)--(10,0);
\draw[orange, line width=1.5pt] (-1,0)--(0,1)--(1,2)--(2,1)--(3,2)--(4,3)--(6,1)--(7,0)--(8,1)--(9,0);
\foreach \y in {1,2,3}
	\draw[dashed](-1,\y)--(10,\y);
\end{tikzpicture}
\]
\caption{An example of a Dyck path of semilength 5.}\label{DyckEx}
\end{figure}

  Denote by $\mathcal D_{n,k}$ the set of all Dyck paths of semi-length $n$ with the property that  (\textit{sum of peak heights}) $-$ (\textit{number of peaks}) = $k$.  For the example path shown in figure~\ref{DyckEx} we have $k=(2+3+1) - 3 = 3$.

  Let $T(n,k)$ be the cardinality of the set $\calD_{n,k}$, as defined in \cite{Tnk}.  It is known that $T(n,k)=0$ when $k>1+\lfloor\frac{n^2}{4}\rfloor$. It is convenient to display the sequence of non-zero values as an array with the entry $T(n,k)$ in the $n^\textrm{th}$ row from the top (starting with $n=0$) and the $k^\textrm{th}$ column (beginning with $k=0$).  The top of the array is shown below.

  \begin{equation} \label{array}
\begin{array}{lllllllllll}
1 & \\
1 &  \\
1 & 1\\
1 & 2 & 2\\
1 & 3 & 5  & 4  & 1 \\
1 & 4 & 9 & 12 & 10 & 4 & 2\\
1 & 5 & 14 & 25 & 31 & 26 & 16 & 9 & 4 & 1 \end{array}
\end{equation}

It is well known that the number of Dyck paths of semi-length $n$ is equal to the $n^\textrm{th}$ Catalan number, $ C_n= \frac{1}{n+1}{2n \choose n}$, so we have 
\begin{equation} \label{tnk}
C_n = \sum_k T(n,k), 
\end{equation}
i.e. the sum of entries on the $n^\textrm{th}$ row is equal to $C_n$.

\medskip

Now we consider fully commutative elements in the  Coxeter group $W$ of type $A_n$ and state the main result in this section. Recall that the length of  an element of $W$ is defined with respect to the generators of $W$.

\begin{thm} \label{thm-A}
Let $\calC_{n,k}$ be the set of fully commutative elements of length $k$ in $W$ for $k\ge 0$. 
Then there is a natural bijection $\Phi: \calC_{n,k} \to \calD_{n+1,k}$. In particular, we have, for $k \ge 0$,
\[ | \calC_{n,k} | = T(n+1,k) .\]
\end{thm}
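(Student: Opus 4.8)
The plan is to construct the bijection $\Phi: \calC_{n,k} \to \calD_{n+1,k}$ explicitly by routing through a canonical form for reduced words of fully commutative elements, then verify that the length statistic $k$ matches the statistic (\emph{sum of peak heights}) $-$ (\emph{number of peaks}) on the Dyck path side. Since fully commutative elements have all reduced words related by commutations of distant generators, each such element $\bw$ has a well-defined \emph{heap} (the partial order on generator occurrences), and my first step would be to extract from this heap a normal form. Concretely, for type $A_n$ the generators are $s_1, \dots, s_n$, and I would collect the occurrences of each $s_i$ and record, for a canonical (say, lexicographically least) reduced word, the maximal ``runs'' of consecutive indices $s_a s_{a+1} \cdots s_b$ that appear. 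The fully commutative (i.e.\ $321$-avoiding) condition forces a rigid layered structure on these runs, and this is exactly what I would exploit to read off a lattice path.

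First I would pin down the canonical form. For fully commutative elements of type $A_n$ it is standard that each such element decomposes uniquely into a sequence of ``descending slices,'' and I would write the reduced word as a concatenation of decreasing segments $[b_1, b_1 - 1, \dots, a_1]\,[b_2, b_2-1, \dots, a_2] \cdots$ where the interval data $(a_j, b_j)$ satisfies interlacing inequalities dictated by full commutativity. The length $k$ is then $\sum_j (b_j - a_j + 1)$. Next I would describe $\Phi$: to the interval data I associate a Dyck path of semi-length $n+1$ whose peaks encode the segments—each segment $[a_j, b_j]$ contributes a peak whose height is $b_j - a_j + 2$ (so that subtracting one per peak recovers the segment length $b_j - a_j + 1$), and the horizontal positions of the peaks are determined by the $a_j$. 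Summing, (\emph{sum of peak heights}) $-$ (\emph{number of peaks}) $= \sum_j\big((b_j - a_j + 2) - 1\big) = \sum_j (b_j - a_j + 1) = k$, which gives the length-matching automatically once the encoding is set up correctly.

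The remaining work is to check that $\Phi$ is a genuine bijection onto $\calD_{n+1,k}$. For well-definedness and injectivity I would argue that the canonical form is unique, so distinct fully commutative elements give distinct interval data and hence distinct paths; the interlacing constraints on $(a_j, b_j)$ coming from full commutativity should correspond precisely to the geometric constraint that the encoded peaks-and-valleys actually trace out a Dyck path (staying weakly above the axis, returning to height $0$ at $2(n+1)$). For surjectivity I would reverse the construction: given any Dyck path of semi-length $n+1$, read off its peaks left to right, convert each peak of height $h$ at a given horizontal location into a descending segment of length $h-1$, and verify the resulting word is reduced and fully commutative in $W$. Verifying that this inverse lands in $W$ (i.e.\ uses only generators $s_1, \dots, s_n$, explaining the shift from semi-length $n$ to $n+1$) and that the two maps are mutually inverse is the bookkeeping core of the argument.

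\textbf{The main obstacle} I anticipate is establishing the exact dictionary between the full-commutativity constraints on the interval sequence $(a_j, b_j)$ and the requirement that the image path is an admissible Dyck path with the right peak statistics; in particular, making the height-encoding rigorous—correctly handling adjacent or nested segments and the valleys between consecutive peaks so that the path never dips below the axis and the $+1$ semi-length shift is forced—is where the subtlety lies. Once the canonical form and its constraints are nailed down, the length-to-statistic identity is a direct computation, and the cardinality statement $|\calC_{n,k}| = T(n+1,k)$ follows immediately from the bijection and the definition of $T$ as $|\calD_{n+1,k}|$.
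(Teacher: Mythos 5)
Your proposal is correct and follows essentially the same route as the paper: a unique canonical factorization of each fully commutative element into decreasing segments (the paper's $T_{i_1}^{m_1}\cdots T_{i_\ell}^{m_\ell}$ with $i_1<\cdots<i_\ell$, $m_1<\cdots<m_\ell$, $i_j\le m_j$), each segment mapped to a peak of height one more than its length, the identity $\sum_j\bigl((m_j-i_j+2)-1\bigr)=k$ giving the statistic match, and the inverse map obtained by reading segments off the peaks (with height-one peaks corresponding to empty segments). The ``interlacing inequalities'' and lattice geometry you flag as the main obstacle are exactly what the paper pins down in its Proposition on canonical reduced words and its triangular-lattice construction.
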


By  Lemma \ref{lem-equiv}, we will identify $\calC_{n,k}$ with the set of homogeneous components of weight graphs $G_\alpha$ with $\alpha$ having height $k$. It follows from Theorem \ref{com} that  a homogeneous representation is completely determined by a homogeneous component of a weight graph. Thus Theorem \ref{thm-A} implies the following results regarding the homogeneous representations.

    \begin{cor}  \label{cor-AA} \hfill
\begin{enumerate}
\item      There exists a bijection between the irreducible homogeneous representations of a KLR algebra of type $A_n$ and the Dyck paths of semi-length $n+1$.  Further, if such a representation is given by a homogeneous component of words with length $k$, the corresponding Dyck path has (sum of peak heights) $-$ (number of peaks) = $k$. 
    \item 
The total number of homogeneous representations of a KLR algebra of type $A_n$, which are given by homogeneous words of length $k$, is $T(n+1,k)$.
\end{enumerate}
\end{cor}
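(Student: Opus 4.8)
The plan is to construct the bijection $\Phi$ explicitly by routing each fully commutative element through a canonical reduced word, and then reading a Dyck path directly off that word. Since $w\in\calC_{n,k}$ is fully commutative, all of its reduced words are interchangeable by commutations of non-adjacent generators, so it suffices to single out one distinguished representative. I claim that each nontrivial $w$ has a unique reduced word that factors into decreasing runs of consecutive generators,
\[
w = \beta_1 \beta_2 \cdots \beta_N, \qquad \beta_t = s_{j_t}\, s_{j_t-1} \cdots s_{i_t} \quad (1 \le i_t \le j_t \le n),
\]
subject to the two interlacing conditions $i_1 < i_2 < \cdots < i_N$ and $j_1 < j_2 < \cdots < j_N$. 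Existence would be shown by greedily parsing any reduced word left to right into maximal decreasing runs and then using commutations to sort the runs so that both the tops $j_t$ and the bottoms $i_t$ increase. Uniqueness, together with the fact that the sorted word is still reduced and fully commutative, would be verified against condition \eqref{homog}: two equal letters demand an intervening neighbor on both sides, and this is precisely what forbids two runs from sharing a top index or a bottom index, pinning down the sorted shape.

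Next I would define $\Phi(w)$ from this data. Each run $\beta_t$ of length $r_t = j_t - i_t + 1$ is recorded as a peak of height $r_t + 1$, while the offsets between successive index intervals $[i_t,j_t]$ prescribe the depths of the intervening valleys and the number of trivial height-one peaks; since the indices range over $\{1,\dots,n\}$ together with one boundary normalization, the resulting lattice path has semi-length exactly $n+1$. The interlacing inequalities $i_t < i_{t+1}$ and $j_t < j_{t+1}$ translate step for step into the requirement that the path stay weakly above the axis, so $\Phi(w)$ is a genuine Dyck path of semi-length $n+1$. The statistic is then immediate: trivial peaks contribute $0$ and each run $\beta_t$ contributes a peak of height $r_t+1$, so
\[
\text{(sum of peak heights)} - \text{(number of peaks)} = \sum_{t=1}^{N}\big((r_t+1)-1\big) = \sum_{t=1}^{N} r_t = k ,
\]
the last equality being the length of $w$. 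Hence $\Phi$ carries $\calC_{n,k}$ into $\calD_{n+1,k}$.

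To finish, I would build the inverse. Given a Dyck path, its peaks of height $\ge 2$ are read left to right as runs, their heights fix the run lengths $r_t$, and the valley heights recover the bottom indices $i_t$ and hence the tops $j_t$; one checks the resulting word obeys the two interlacing conditions and is therefore the canonical form of a unique fully commutative element. Verifying that the two assignments are mutually inverse, and that the constraints defining admissible run-sequences match those defining Dyck paths, yields the bijection, whence $|\calC_{n,k}| = |\calD_{n+1,k}| = T(n+1,k)$.

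The main obstacle is the canonical form itself: proving that every fully commutative element admits exactly one factorization of the stated shape, and — more delicately — that the interlacing inequalities on the pairs $(i_t,j_t)$ correspond exactly to the non-negativity of the path heights, so that admissible run-sequences and Dyck paths of semi-length $n+1$ are in perfect correspondence. Once this dictionary between ``decreasing runs with increasing tops and bottoms'' and ``peaks separated by valleys'' is secured, the length computation and the enumeration $|\calC_{n,k}| = T(n+1,k)$ follow with only routine bookkeeping.
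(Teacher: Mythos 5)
Your proposal follows essentially the same route as the paper: your canonical factorization into decreasing runs with strictly increasing tops and strictly increasing bottoms is exactly the paper's Proposition~\ref{can form} (proved there via the Gr\"obner--Shirshov normal form of Lemma~\ref{BS form} together with the homogeneity argument you sketch for uniqueness), and your peak-height dictionary, the statistic computation $\sum_t\bigl((r_t+1)-1\bigr)=k$, and the peak-reading inverse coincide with the paper's maps $\Phi$ and $\Psi$ and Lemma~\ref{lem}. The one step you defer---existence and uniqueness of the canonical form---is precisely where the paper cites the standard normal form for all of $W$ and then prunes it using condition~\eqref{homog}, which is a cleaner route than your greedy parse-and-sort procedure (where runs can merge after commutations), but the overall argument is the same.
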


We will prove Theorem \ref{thm-A} in Section \ref{bij}, after we construct canonical words for fully commutative elements in the next subsection. 

\subsection{Canonical reduced words}
\label{sec:Canonical A}
 We define the decreasing segments
 \[ T_i^j = \left\{
     \begin{array}{lr}
       ~[j, j-1, \hdots, i+1, i] & \hbox{ for } i \le j, \\
             ~[~] & \hbox{ for } i>j .\\
     \end{array}
   \right. \]
The word $T_i^j$ will also be considered as the element $s_j s_{j-1} \cdots s_{i+1}s_i \in W (\cong S_{n+1})$. In particular, the product $T_i^jT_{i'}^{j'}$ given by concatenation is well defined.

These segments will be fundamental, so we record some facts here that we will use freely.

\begin{lemma}
Let $T_i^j$ be a segment, as defined above.  Then we have, for $i, i' , j, j' \in I$,
\begin{enumerate}
 \item $T_i^j$ is a homogeneous word;
 \item If $i-1=j'\geq i'$ then $T_i^jT_{i'}^{j'} = T_{i'}^{j}$;
 \item If $j'<i-1$ then $T_i^jT_{i'}^{j'} = T_{i'}^{j'}T_i^j$.
\end{enumerate}
\end{lemma}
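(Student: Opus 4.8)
The plan is to treat the three claims separately, since each reduces to an elementary observation about the letters appearing in the decreasing run $T_i^j = [j, j-1, \dots, i]$.

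For (1), I would simply observe that the letters $j, j-1, \dots, i$ of $T_i^j$ are pairwise distinct. Consequently there is no pair of positions $r<s$ with $w_r = w_s$, so the hypothesis of condition \eqref{homog} is never met and the condition holds vacuously. Hence $T_i^j$ is a homogeneous word. For (2), I would assume $i\le j$ so that $T_i^j$ is nonempty (the cases with an empty segment being immediate). Since $j' = i-1$ and $i'\le j'$, the segment $T_{i'}^{j'} = [\,i-1, i-2, \dots, i'\,]$ begins exactly one step below the final letter $i$ of $T_i^j = [\,j, \dots, i\,]$. Concatenating the two therefore produces the single decreasing run $[\,j, j-1, \dots, i, i-1, \dots, i'\,]$, which is by definition the word $T_{i'}^{j}$. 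This is an equality of words, hence of the corresponding elements of $W \cong S_{n+1}$.

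For (3), I would use that in type $A$ two generators commute, $s_a s_b = s_b s_a$, whenever $|a-b|\ge 2$. Every letter of $T_i^j$ is at least $i$, while every letter of $T_{i'}^{j'}$ is at most $j' \le i-2$; thus any generator drawn from the first segment and any drawn from the second differ by at least $2$ and so are non-neighbors, hence commute. Applying this commutation repeatedly moves the entire block $T_{i'}^{j'}$ past $T_i^j$, which gives $T_i^j T_{i'}^{j'} = T_{i'}^{j'} T_i^j$.

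None of the three steps presents a genuine obstacle; the only thing requiring a little care is the bookkeeping of index ranges. In particular, for (3) I would want to verify that the gap is at least $2$ even at the boundary value $j' = i-2$ (so that the relevant generators are non-neighbors rather than neighbors), and to confirm in (2) that the hypothesis $j' = i-1 \ge i'$ is precisely what makes the two runs splice into one contiguous decreasing sequence. Dismissing the degenerate cases in which one of the segments is empty is routine.
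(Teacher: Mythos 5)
Your proof is correct and matches the paper's approach: the paper disposes of this lemma with the single line that the statements ``follow directly from the definitions,'' and your argument simply supplies the routine details (distinct letters for (1), splicing of decreasing runs for (2), and the commutation $s_as_b=s_bs_a$ for $|a-b|\ge 2$ for (3)) that the paper leaves implicit. The one point worth keeping in mind is the one you already note: (2) is an equality of words, while (3) is an equality of elements of $W$ obtained by repeated commutations, which is exactly the sense in which the paper uses it.
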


\begin{proof}
  These statements follow directly from the definitions.
\end{proof}

We can use these segments to obtain a canonical form for the elements in the Coxeter group $W$ of type $A_n$:
\begin{lemma} \label{BS form}
 Every element  in $W$ of type $A_n$ can be written in the form
 \begin{equation} \label{fform}
 T^1_{i_1}T^2_{i_2}\cdots T^n_{i_n}
 \end{equation}
  where $1\leq i_j \leq j+1$ for all $1 \leq j \leq n$.
\end{lemma}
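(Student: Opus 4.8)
The plan is to induct on $n$, using the chain of standard parabolic subgroups $W_j := \langle s_1, \dots, s_j \rangle \cong S_{j+1}$ and showing that the segments $T^n_i$, for $1 \le i \le n+1$, furnish a complete set of right coset representatives for $W_{n-1}$ in $W = W_n$. I would identify $W$ with $S_{n+1}$, with $s_k = (k,k+1)$ and the word $s_{i_1}\cdots s_{i_r}$ acting by functional composition (rightmost generator applied first); then $W_{n-1}$ is precisely the stabilizer of the letter $n+1$.

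The first step is to compute the permutation $T^n_i = s_n s_{n-1}\cdots s_i$ explicitly. Tracking the action directly shows that $T^n_i$ fixes $1,\dots,i-1$, sends $i \mapsto n+1$, and sends $m \mapsto m-1$ for $i+1 \le m \le n+1$; that is, $T^n_i$ is the cycle $(i,\,n+1,\,n,\,\dots,\,i+1)$. The single fact I will use repeatedly is $T^n_i(i) = n+1$, equivalently $(T^n_i)^{-1}(n+1) = i$, which holds for all $1 \le i \le n$ and trivially for $i = n+1$, since $T^n_{n+1} = [~]$ is the identity.

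The key step is the transversal claim. Given an arbitrary $w \in W$, I would set $i_n := w^{-1}(n+1) \in \{1,\dots,n+1\}$ and put $u := w\,(T^n_{i_n})^{-1}$. Because every element of $W_{n-1}$ fixes $n+1$, and because $u(n+1) = w\bigl((T^n_{i_n})^{-1}(n+1)\bigr) = w(i_n) = n+1$, the factor $u$ fixes $n+1$ and therefore lies in $W_{n-1} \cong S_n$. This writes $w = u\,T^n_{i_n}$ with $u \in W_{n-1}$ and $1 \le i_n \le n+1$. Applying the inductive hypothesis to $u$ gives $u = T^1_{i_1}\cdots T^{n-1}_{i_{n-1}}$ with $1 \le i_j \le j+1$ for $1 \le j \le n-1$, and appending $T^n_{i_n}$ yields the desired form. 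The base case $n=1$ is immediate, since $T^1_1 = s_1$ and $T^1_2 = [~]$ already exhaust $S_2$.

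I do not expect a serious obstacle here; the only point requiring genuine care is the explicit identification of $T^n_i$ as a cycle, and relatedly the fixing of the composition convention so that the index extracted from $w$ is exactly $i_n = w^{-1}(n+1)$ (the mirror convention would instead produce $w(\cdot)$, so I would verify this on a small case such as $S_3$). As a sanity check confirming surjectivity — and in fact uniqueness of the expression — there are $\prod_{j=1}^{n}(j+1) = (n+1)!$ admissible tuples $(i_1,\dots,i_n)$, exactly $|S_{n+1}|$, so the construction is forced to be a bijection.
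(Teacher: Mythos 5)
Your proof is correct, and it takes a genuinely different route from the paper: the paper does not prove this lemma at all, but instead remarks that it is standard and cites Lemma 3.2 of \cite{Bokut2001}, whose argument goes through a Gr\"obner--Shirshov basis for the Coxeter presentation of $W$. Your argument is a self-contained, elementary alternative: identifying $W$ with $S_{n+1}$, you compute $T^n_i$ explicitly as the cycle $(i,\,n+1,\,n,\,\dots,\,i+1)$, so that the $n+1$ segments $T^n_1,\dots,T^n_{n+1}$ hit every possible preimage of the letter $n+1$ exactly once; this makes them a transversal of the parabolic subgroup $W_{n-1}$ (the stabilizer of $n+1$), and induction along the stabilizer chain $W_1 \subset W_2 \subset \cdots \subset W_n$ produces the normal form. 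The one point needing care --- the composition convention, which decides whether the extracted index is $w^{-1}(n+1)$ or $w(n+1)$ --- you flag explicitly, and your counting remark ($\prod_{j=1}^n (j+1) = (n+1)!$ admissible tuples) upgrades existence to uniqueness; the paper makes the same counting observation, but only as a consistency check in a remark, not as part of a proof. As for what each approach buys: yours is short, self-contained, and exposes the coset-transversal structure behind the normal form (it is the classical ``bubble-sort'' decomposition of the symmetric group), which is arguably better suited to this paper since type $A$ is all that is needed; the Gr\"obner--Shirshov route of the cited reference is heavier machinery but produces the rewriting system itself, characterizes the canonical words as the irreducible ones, and applies uniformly to other Coxeter groups.
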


\begin{remark}
As a check, notice that there are $(n+1)!$ choices for the $i_j$'s in this form, and hence $(n+1)!$ elements in $W$. 
The above lemma is standard.
One can find a proof in  Lemma 3.2 of \cite{Bokut2001}, which  uses a Gr{\"o}bner--Shirshov basis.
 \end{remark}

 Using the canonical form \eqref{fform}, we can describe canonical representatives of  homogeneous components  or  fully commutative elements of $W$ in a coherent way.

\begin{prop} \label{can form}
  Every homogeneous component of a weight graph contains a unique word of the form
  \begin{equation} \label{eqn-fc}
T_{i_1}^{m_1}T_{i_2}^{m_2}\cdots T_{i_\ell}^{m_\ell}\end{equation}
  where $i_j\leq m_j$ for each $j$, $1\leq i_1< i_2 < \cdots < i_\ell\leq n$ and  $ m_1 < m_2 <\cdots < m_\ell$. Equivalently, a fully commutative element of $W$ can be uniquely written in the form \eqref{eqn-fc}.
\end{prop}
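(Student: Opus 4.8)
The plan is to prove Proposition \ref{can form} by refining the general canonical form of Lemma \ref{BS form}. Starting from an arbitrary element written as $T^1_{j_1}T^2_{j_2}\cdots T^n_{j_n}$ with $1\le j_p\le p+1$, I first observe that a segment $T^p_{p+1}$ is empty (since the lower index exceeds the upper), so every such factor can be deleted without changing the element. After deleting all empty segments, what remains is a product of genuinely nonempty decreasing segments $T_{i}^{m}$ with $i\le m$, indexed by an increasing sequence of upper indices $m_1<m_2<\cdots<m_\ell$ drawn from $\{1,\dots,n\}$. This already matches the strict increase $m_1<\cdots<m_\ell$ demanded in \eqref{eqn-fc}; the content of the proposition is that fully commutativity forces the lower indices $i_1,\dots,i_\ell$ to be strictly increasing as well, and conversely that any such strictly-increasing-lower-index word is fully commutative.

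The key step will be to analyze what goes wrong when the lower indices fail to increase. Suppose two adjacent nonempty segments $T_{i_p}^{m_p}T_{i_{p+1}}^{m_{p+1}}$ occur with $m_p<m_{p+1}$ but $i_{p+1}\le i_p$. I would show that the concatenated word then necessarily contains a forbidden subword of the form $[a,a\pm1,a]$ (equivalently $s_a s_{a\pm 1} s_a$), which by the definition in the ``Fully commutative elements'' subsection means the element is \emph{not} fully commutative. Concretely, the first segment contributes the letters $m_p,m_p-1,\dots,i_p$ in decreasing order, and the second contributes $m_{p+1},m_{p+1}-1,\dots,i_{p+1}$; when $i_{p+1}\le i_p\le m_p<m_{p+1}$, the value $i_p$ (and its neighbor $i_p+1$, which lies in the overlap range) appears in both runs, and tracing the three relevant positions exhibits the $[a,a+1,a]$ pattern that cannot be removed by commuting generators. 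Conversely, when $i_1<i_2<\cdots<i_\ell$ strictly, I would verify directly that no reduced word admits such a subword, so the element is fully commutative and \eqref{eqn-fc} is an honest canonical form; part (1) of the preceding lemma, which says each $T_i^j$ is a homogeneous word, together with the commuting and merging rules (2)–(3), will supply the needed reductions.

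For uniqueness, I would argue that the data $(i_j,m_j)_{j=1}^\ell$ is recoverable from the element: the strictly increasing $m_j$ are exactly the distinct ``top'' values, and once the tops are fixed the lower indices are determined by the requirement $i_1<\cdots<i_\ell$ together with the length/support of the element. More cleanly, since two distinct words of the form \eqref{eqn-fc} represent distinct elements of $W$ (they have different reduced expressions that cannot be interchanged into one another precisely because they are fully commutative and hence have a unique commutation class), the representation is unique. I expect the main obstacle to be the careful combinatorial bookkeeping in the forward direction: verifying that $i_{p+1}\le i_p$ really does produce an \emph{inescapable} $s_a s_{a\pm1} s_a$ subword, rather than one that can be commuted away, requires tracking the precise positions of the repeated letter and its neighbor across the segment boundary and invoking condition \eqref{homog} (via Lemma \ref{lem-equiv}) to rule out any rewriting. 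Handling the edge cases where $i_p=i_{p+1}$ versus $i_{p+1}<i_p$, and the boundary where $m_p+1=m_{p+1}$ so the segments would merge by part (2) of the lemma, is where I would spend the most care.
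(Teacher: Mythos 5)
Your main argument is essentially the paper's: both proofs start from the canonical form of Lemma \ref{BS form}, delete the empty segments $T^j_{j+1}$ (which automatically makes the surviving upper indices strictly increasing), and then show by contradiction that a failure $i_{p+1}\le i_p$ between adjacent segments is impossible. The only real difference is how the contradiction is phrased: the paper observes that the two occurrences of the letter $i_p$ in $T_{i_p}^{m_p}T_{i_{p+1}}^{m_{p+1}}$ are separated by exactly one neighbor, $i_p+1$, which directly violates condition \eqref{homog}; you instead commute the intervening letters (all of which commute with $i_p$ except $i_p+1$) to exhibit a consecutive factor $[i_p,i_p+1,i_p]$ and contradict full commutativity. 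These are equivalent via Lemma \ref{lem-equiv}, and your version only needs the extra (easy) remark that the moves you use are admissible transpositions, so you remain inside the component. Your converse direction (every word with strictly increasing $i_j$'s and $m_j$'s is homogeneous) is not needed for the proposition as stated, though it is what the paper later invokes in proving Theorem \ref{thm-A}.

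The one step that fails as written is your ``more cleanly'' uniqueness argument. You claim two distinct words of the form \eqref{eqn-fc} representing the same element ``cannot be interchanged into one another precisely because they are fully commutative and hence have a unique commutation class.'' This is backwards: full commutativity says precisely that any two reduced words of such an element \emph{are} related by commutations, so this observation produces no contradiction, and you have not shown that distinct words of form \eqref{eqn-fc} fail to be commutation-equivalent. The repair is already available in your own setup (and is what the paper does): by Lemma \ref{lem-equiv} a homogeneous component is the set of reduced words of a single element of $W$; each element of $W$ has a unique expression of the form \eqref{fform} (Lemma \ref{BS form} together with the count of $(n+1)!$ normal forms against $|W|=(n+1)!$); and re-inserting empty segments identifies words of form \eqref{eqn-fc} injectively with words of form \eqref{fform}. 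Hence a component can contain at most one word of form \eqref{eqn-fc}, which is the uniqueness you need.
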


\begin{proof}
Clearly, every homogeneous component has a unique word of the form \eqref{fform}. After omitting, if any, segments of the form $T^j_{j+1}$, we obtain  $\bw=T_{i_1}^{m_1}T_{i_2}^{m_2}\cdots T_{i_\ell}^{m_\ell}$ with $i_j\leq m_j$ for each $j$ and  $ m_1 < m_2 <\cdots < m_\ell$. We need only to prove $1\leq i_1< i_2 < \cdots < i_\ell\leq n$.  For the sake of contradiction, assume that $i_r \geq i_s$ for some $r<s$. Without loss of generality, suppose that $i_1\geq i_2$.  Then $\bw$ has as a subword $[m_1, \hdots, i_1,  m_2, \hdots, i_1, \hdots, i_2]$.  But this subword has two occurrences of the letter $i_1$ separated by only one neighbor $i_1+1$, therefore violating the homogeneity assumption. The equivalence of the second assertion follows from Lemma \ref{lem-equiv}.
\end{proof}

\subsection{A bijection---proof of Theorem \ref{thm-A}} \label{bij} 
Recall that  $\calC_{n,k}$ is the set of fully commutative elements of length $k$ in $W$ for $k\ge 0$.  By Lemma \ref{lem-equiv}, we will also consider $\calC_{n,k}$ as the set of homogeneous components from all weight graphs $G_\alpha$ with $\alpha$ having height $k$.
We need to  establish a bijection  $\Phi$: $\calC_{n,k}\to \calD_{n+1,k}$ to prove Theorem \ref{thm-A}. We first 
construct a lattice as shown in Figure~\ref{lattice}, ranging (horizontally) from $(0,0)$ to $(2n+2,0)$. Notice that each square block corresponds to $T_i^j$ for some $i\le j$, and a Dyck path can have peaks at squares $T_i^j$ or at bottom triangles. Now suppose that we have a homogeneous component  $C \in \calC_{n,k}$.  By  Proposition \ref{can form}, we can choose a canonical representative $\bw =T_{i_1}^{m_1}T_{i_2}^{m_2}\cdots T_{i_\ell}^{m_\ell} $  with $i_j\leq m_j$ for each $j$, where $i_1< i_2 < \cdots < i_\ell$ and  $ m_1 < m_2 <\cdots < m_\ell$.  

\begin{dfn}
Suppose that $C \in \calC_{n,k}$ and $\bw=T_{i_1}^{m_1}T_{i_2}^{m_2}\cdots T_{i_\ell}^{m_\ell} $ are as above. Then the Dyck path $\Phi(C)$ is defined to be the  path with peaks only  at the square blocks containing $T_{i_j}^{m_j}$ ($j=1, 2, \dots, \ell$) and possibly, at bottom triangles.
\end{dfn}

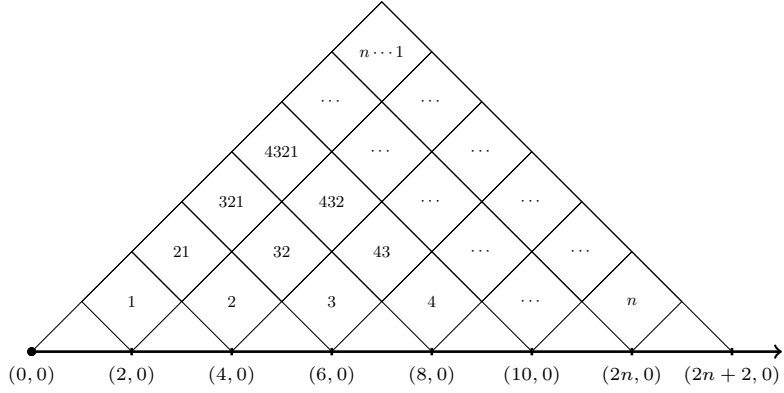
\begin{figure}
\[
\begin{tikzpicture}[x=.95cm,y=.95cm, scale=0.7]
    \foreach \x in {0,2,4,6,8,10}
\draw[shift={(\x-1,0)},color=black,line width=1pt ] (0pt,2pt) -- (0pt,-2pt) node[below] {\tiny $(\x,0)$};
\draw[shift={(12-1,0)},color=black,line width=1pt ] (0pt,2pt) -- (0pt,-2pt) node[below] {\tiny $(2n,0)$};
\draw[shift={(14-1,0)},color=black,line width=1pt ] (0pt,2pt) -- (0pt,-2pt) node[below] {\tiny $(2n+2,0)$};
\draw[line width=1pt, ->] (-1,0)--(14,0);
\draw(-1,0)--(6,7)--(13,0);
  \draw[nodes={draw, diamond, line width=.4pt, scale=0.7}, minimum height=1.9cm, minimum width=1.9cm,font=\footnotesize]
  (1,1) node{1}
   (3,1) node{2}
   (5,1) node{3}
   (7,1) node{4}
   (9,1) node{$\cdots$}
   (11,1) node{$n$}
   (2,2) node{21}
   (4,2) node{32}
   (6,2) node{43}
   (8,2) node{$\cdots$}
   (10,2) node{$\cdots$}
   (3,3) node{321}
   (5,3) node{432}
   (7,3) node{$\cdots$}
   (9,3) node{$\cdots$}
   (4,4) node{4321}
   (6,4) node{$\cdots$}
   (8,4) node{$\cdots$}
   (5,5) node{$\cdots$}
   (7,5) node{$\cdots$}
   (6,6) node{$n \cdots 1$};
   
   \foreach \x in {0,2,...,12}
   \draw[fill= black] (-1,0) circle (2pt);
\end{tikzpicture}
\]
\caption{The triangular lattice for tracing Dyck paths} \label{lattice}
\end{figure}

Before we check that the map $\Phi$ is well-defined, i.e. $\Phi(C) \in \calD_{n+1,k}$, we consider an example to see how the definition works. 

\begin{ex} \label{ex-1}
Suppose the quiver $\Gamma$ is of type $A_4$, and the homogeneous component $C$ is

  \[
  \begin{tikzpicture}[x=1.5cm, y=1cm, scale=0.8]
  \draw[nodes={draw,rectangle, fill=white ,fill opacity=1, scale=0.8}]
        (0,4) node{32143}--(0,3) node{32413}
     --(-1,2)node{34213}
     --(0,1)node{34231}
     --(1,2)node{32431}
     --(0,3);
  \end{tikzpicture}
  \]

  Then the canonical representative of this component is $\bw = [3,2,1,4,3] = T_1^3T_3^4$, and the Dyck path $\Phi(C)$ is given by:
 \[     \begin{tikzpicture}[x=1.05cm,y=1.05cm, scale=0.6]
    \foreach \x in {0,2,4,6,8,10}
\draw[shift={(\x-1,0)},color=black, ] (0pt,2pt) -- (0pt,-2pt) node[below] {\tiny $(\x,0)$};
\draw[line width=1pt, ->] (-1,0)--(10,0);
\draw(-1,0)--(4,5)--(9,0);
    \draw[nodes={draw, diamond, line width=.4pt, scale=0.6}, minimum height=2.1cm, minimum width=2.1cm]
  (1,1) node{1}
   (3,1) node{2}
   (5,1) node{3}
   (7,1) node{4}
   (2,2) node{21}
   (4,2) node{32}
   (6,2) node{43}
   (3,3) node{321}
   (5,3) node{432}
   (4,4) node{4321};
   \foreach \x in {0,2,...,8}
   \draw[fill= black] (-1,0) circle (2pt);
     \draw[orange, line width=2 pt] (-1,0)--(3,4)--(5,2)--(6,3)--(9,0);
\end{tikzpicture} \]
Here the sum of peak heights of the Dyck path is $4+3=7$, while the number of peaks is $2$. Then we see that $k=7-2=5$ is equal to the length of the corresponding word $\bw=[3,2,1,4,3]$.
\end{ex}

\begin{lemma} \label{lem}
The map $\Phi$ is well-defined.
\end{lemma}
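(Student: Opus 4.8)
The plan is to read off from the canonical word $\bw = T_{i_1}^{m_1}\cdots T_{i_\ell}^{m_\ell}$ the exact positions and heights of the peaks that the definition prescribes, and then check directly that these data assemble into a genuine Dyck path of semi-length $n+1$ lying in $\calD_{n+1,k}$. Throughout I use coordinates in which the lattice of Figure~\ref{lattice} runs from $(0,0)$ to $(2n+2,0)$.

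First I would record the geometry of the lattice: the block carrying $T_i^j$ sits over horizontal position $i+j$, and a peak at that block has height equal to one more than the length of $T_i^j$, namely $(j-i+1)+1 = j-i+2$. Hence the block $T_{i_r}^{m_r}$ forces a peak at position $p_r = i_r+m_r$ of height $h_r = m_r-i_r+2$. Since $i_1<\cdots<i_\ell$ and $m_1<\cdots<m_\ell$, we get $p_{r+1}-p_r = (i_{r+1}-i_r)+(m_{r+1}-m_r)\ge 2$, so the prescribed peaks occur at strictly increasing positions.

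Next I would argue that the path joining these peaks is forced (so that $\Phi$ is single-valued) and valid. The stipulation that the only peaks lie over the listed blocks or over the bottom triangles forces a straight descent out of each block-peak and a straight ascent into the next, with height-$1$ bottom triangles inserted only along the axis. A one-line computation gives the height of the valley between peaks $r$ and $r+1$,
\[ v_r = \tfrac12\bigl(h_r+h_{r+1}-(p_{r+1}-p_r)\bigr) = m_r-i_{r+1}+2, \]
and from $h_r-v_r = i_{r+1}-i_r>0$ and $h_{r+1}-v_r = m_{r+1}-m_r>0$ each block-peak is a genuine local maximum. When $v_r\ge 0$ the two peaks are bridged by a single valley of that height; when $v_r<0$ the descent hits the axis at position $2m_r+2$, the following ascent leaves it at $2i_{r+1}-2$, and the intervening gap of width $2(i_{r+1}-m_r-2)$ is tiled by $i_{r+1}-m_r-2$ bottom triangles (the two endpoints coincide precisely when $v_r=0$). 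The same computation at the two ends shows the first ascent leaves the axis at $2(i_1-1)$, preceded by $i_1-1$ bottom triangles, and the last descent meets the axis at $2m_\ell+2$, followed by $n-m_\ell$ bottom triangles; thus the path runs exactly from $(0,0)$ to $(2n+2,0)$. Finally $i_r\ge 1$ and $m_r\le n$ give $h_r\le \min(p_r,\,2(n+1)-p_r)$, so every peak lies inside the lattice triangle and, together with the non-negative valleys, the path never leaves the first quadrant. Hence $\Phi(C)$ is a Dyck path of semi-length $n+1$; the degenerate case $\ell=0$ (the identity) gives the path of $n+1$ unit bottom triangles.

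It then remains to compute the statistic. Each bottom triangle has height $1$ and contributes $1-1=0$ to $(\text{sum of peak heights})-(\text{number of peaks})$, whereas the block-peaks contribute $\sum_r (h_r-1)=\sum_r(m_r-i_r+1)=\sum_r\operatorname{length}(T_{i_r}^{m_r})=\operatorname{length}(\bw)=k$. Therefore $\Phi(C)\in\calD_{n+1,k}$. I expect the only delicate point to be the case analysis in the third paragraph, namely verifying that when consecutive prescribed peaks are far apart the forced descent and ascent to the axis do not overlap; but this collapses to the single inequality $2m_r+2\le 2i_{r+1}-2$, equivalent to $v_r\le 0$, so no essentially new difficulty appears.
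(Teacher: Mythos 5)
Your proposal is correct and takes essentially the same route as the paper's proof: both attach to each segment $T_{i_j}^{m_j}$ a peak of height $(m_j-i_j)+2$ and then verify that (sum of peak heights) $-$ (number of peaks) $= \ell + \sum_{j}(m_j-i_j)$ equals the length $k$ of $\bw$, with the bottom-triangle peaks contributing $1-1=0$. The only difference is one of detail, not of method: the paper compresses your entire coordinate-geometry verification (strictly increasing peak positions, valley heights $v_r=m_r-i_{r+1}+2$, endpoint and boundary checks) into the single assertion that since $i_1<\cdots<i_\ell$ and $m_1<\cdots<m_\ell$ there is no redundancy among the peaks and the path is uniquely determined, whereas you prove this explicitly.
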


\begin{proof}
Let $ C\in \calC_{n,k}$ be a homogeneous component with canonical representative \[\bw=T_{i_1}^{m_{1}}T_{i_2}^{m_{2}}\cdots T_{i_\ell}^{m_{\ell}}.\] Since $i_1< i_2 < \cdots < i_\ell$ and  $ m_1 < m_2 <\cdots < m_\ell$, there is no redundancy among the peaks and the corresponding Dyck path $D$ is uniquely determined. Note that each segment $T_{i_j}^{m_{j}}$ contains $m_{j} - i_j +1$ letters.  Since $\bw$ has $k$ letters by assumption, we have
    \begin{eqnarray*}
      k & = & \sum_{j =1}^\ell [(m_{j} - i_j) +1] = \ell + \sum_{j =1}^\ell (m_{j} - i_j). \\
    \end{eqnarray*}
On the other hand, in the path $\Phi(C)$, each of the $\ell$ segments $T_{i_j}^{m_{j}}$ corresponds to a peak with height $(m_{j} - i_j) +2$.  We then have
    \[
      \hbox{(peak heights)} -\hbox{(\# of peaks)} = \sum_{j=1}^\ell[(m_{j} - i_j) +2] - \ell = \ell + \sum_{j =1}^\ell (m_{j} - i_j) =k.
    \]
Thus $\Phi(C) \in \calD_{n+1,k}$ as desired.
\end{proof}

\begin{dfn}
  To define the inverse, $\Psi:\calD_{n+1,k}\to\calC_{n,k}$, we simply read the words contained in the square blocks of the peaks of the Dyck path $D$ from left to right, ignoring peaks at bottom triangles. Then we   obtain $\bw =T_{i_1}^{m_1}T_{i_2}^{m_2}\cdots T_{i_\ell}^{m_\ell} $, and $\bw$ determines the corresponding homogeneous component $\Psi(D)$.
  \end{dfn}
 
 A similar argument as in Lemma \ref{lem} shows that the map $\Psi$ is well-defined. 
  
\begin{ex}  \label{ex-2}
 Suppose that we have the Dyck path $D$:
\[       \begin{tikzpicture}[x=1.05cm,y=1.05cm, scale=0.6]
    \foreach \x in {0,2,4,6,8,10}
\draw[shift={(\x-1,0)},color=black, ] (0pt,2pt) -- (0pt,-2pt) node[below] {\tiny $(\x,0)$};
\draw[line width=1pt, ->] (-1,0)--(10,0);
\draw(-1,0)--(4,5)--(9,0);
    \draw[nodes={draw, diamond, line width=.4pt, scale=0.6}, minimum height=2.1cm, minimum width=2.1cm]
  (1,1) node{1}
   (3,1) node{2}
   (5,1) node{3}
   (7,1) node{4}
   (2,2) node{21}
   (4,2) node{32}
   (6,2) node{43}
   (3,3) node{321}
   (5,3) node{432}
   (4,4) node{4321};
   \foreach \x in {0,2,...,8}
   \draw[fill= black] (-1,0) circle (2pt);
      \foreach \x in {0,2,4,6,8,10}
\draw[shift={(\x-1,0)},color=black, ] (0pt,2pt) -- (0pt,-2pt) node[below] {\tiny $(\x,0)$};
\draw[line width=1pt, ->] (-1,0)--(10,0);
     \draw[orange, line width=2 pt] (-1,0)--(0,1)--(1,0)--(2,1)--(3,2)--(4,1)--(6,3)--(9,0);
\end{tikzpicture}
\]

Reading, from left to right, the segments contained in the peaks, we see that the component $\Psi(D)$ is represented by the word $[2,4,3]  = T_2^2T_3^4$.    This is the homogeneous component
  \[
  \begin{tikzpicture}[x=1.5cm, y=1cm, scale=0.8]
  \draw[nodes={draw,rectangle, fill=white ,fill opacity=1, scale=0.8}]
        (1,0) node[above]{243}--(1,-1) node[below]{423};
  \end{tikzpicture}
  \]
Note that the sum of peak heights is $1+2+3=6$ and the number of peaks is $3$. The value of $k=6-3=3$ equals the length of $\bw=[2,4,3]$. 
\end{ex}
 
 Now we complete the proof that the map $\Phi$ is a bijection with inverse $\Psi$.
 It is clear from the construction that the blocks in the lattice that form the peaks of $\Phi(C)$ contain the words, respectively, $T_{i_1}^{m_{1}}, T_{i_2}^{m_{2}}, \hdots,  T_{i_j}^{m_{\ell}}.$  Thus, we see that $\Psi(\Phi(C))= C$.
Conversely, suppose that $D\in\calD$ is a Dyck path that has been superimposed on the triangular lattice, and assume that $D$ has $\ell$  peaks ($\ell>0$) that correspond to square blocks. Reading the words occurring at each of these peaks, we obtain $T_{i_1}^{m_{1}}, T_{i_2}^{m_{2}}, \hdots,  T_{i_\ell}^{m_{\ell}}$.  We notice that, by construction, $i_1 < i _2 <\cdots < i_\ell$ and $m_{1} < m_{2} < \cdots < m_{\ell}$. By Proposition \ref{can form}, the word $T_{i_1}^{m_{1}}T_{i_2}^{m_{2}}\cdots T_{i_\ell}^{m_{\ell}}$ represents a homogeneous component.
We also see that $\Phi(\Psi(D)) = D$, and so $\Psi$ is a two-sided inverse of $\Phi$, proving the bijection. This completes the proof of Theorem \ref{thm-A}.

\section{Dimensions of Homogeneous Representations} \label{last}
In \cite{Klesh2008}, Kleshchev and Ram explain how each fully commutative element $\bw$ of the Weyl group $A_n$ can be associated to an abacus diagram, which gives rise to a skew tableau $\mathcal{ \lambda}$.  Further, if $\bw$ is a dominant minuscule element, the Peterson-Proctor hook formula applied to this tableau will count the number of reduced expressions for the fully commutative element, and thus count the dimension of the corresponding $R_\alpha$-module.

In this section, we will adopt our parameterization of the homogeneous modules and obtain a dimension formula only using combinatorics of Dyck paths.   
We begin by extending the ascents on the Dyck path to connect peaks of the Dyck path with the corresponding points on the $x$-axis, and highlighting any block that appears on one of these extended ascents. For example, we have

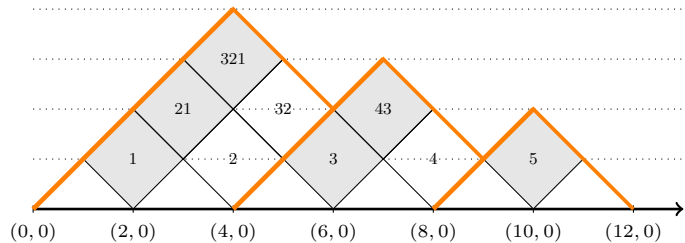
\begin{figure}[h]
\[
\begin{tikzpicture}[x=.95cm,y=.95cm, scale=0.7]
\foreach \x in {0,2,...,12}
\draw[shift={(\x-1,0)},color=black, ] (0pt,2pt) -- (0pt,-2pt) node[below] {\tiny $(\x,0)$};
\draw[line width=1 pt, ->] (-1,0)--(12,0);
\foreach \y in {1,..., 4}
	\draw[dotted](-1,\y)--(12,\y);
\draw[dashed] 
     (3,0)--(6,3)
     (7,0)--(9,2);
     \draw[nodes={draw, diamond, line width=.4pt, scale=0.7,fill=gray!20}, minimum height=1.9cm, minimum width=1.9cm,font=\footnotesize]
  (1,1) node{1}
   (5,1) node{3}
   (9,1) node{5} 
   (2,2) node{21}
   (6,2) node{43}
   (3,3) node{321};
    \draw[nodes={draw, diamond, line width=.4pt, scale=0.7}, minimum height=1.9cm, minimum width=1.9cm,font=\footnotesize]
   (3,1) node{2}
   (4,2) node{32}
   (7,1) node{4};
   \draw[orange, line width=1.4pt] (-1,0)--(0,1)--(1,2)--(2,3)--(3,4)--(4,3)--(5,2)--(6,3)--(7,2)--(8,1)--(9,2)--(11,0);
   \draw[orange, line width=1.8pt] (-1,0)--(3,4)
    (3,0)--(6,3)
    (7,0)--(9,2);
\end{tikzpicture}
\]
\caption{A Dyck path with extended ascents}\label{Ascents}
\end{figure}

Now, for any block $T_i^j$ that appears on an extended ascent, we draw a subpath $P_D(i,j)$ according to the following instructions:
\begin{enumerate}
  \item Draw a path from the $x$-axis past blocks $T_i^i, T_i^{i+1},\hdots$ up to the peak of block $T_i^j$.
  \item \label{peak} From there, the path descends until hits another extended ascent, or returns to the $x$-axis.
  \item If the path hits an extended ascent, take one step up, and then go back to step (\ref{peak}).
  \item When the path returns to the $x$-axis, it is complete.
\end{enumerate}

\begin{ex}
If $D$ is the Dyck path in Figure \ref{Ascents}, then we obtain:

\[ 
P_D(1,1) \qquad  
\raisebox{-50pt}{\begin{tikzpicture}[x=.95cm,y=.95cm, scale=0.7]
\foreach \x in {0,2,...,12}
\draw[shift={(\x-1,0)},color=black, ] (0pt,2pt) -- (0pt,-2pt) node[below] {\tiny $(\x,0)$};
\draw[line width=1 pt, ->] (-1,0)--(12,0);
\foreach \y in {1,..., 4}
	\draw[dotted](-1,\y)--(12,\y);
     \draw[nodes={draw, diamond, line width=.4pt, scale=0.7, color=black!20}, minimum height=1.9cm, minimum width=1.9cm,font=\footnotesize]
     (3,1) node{2}
   (5,1) node{3}
   (9,1) node{5} 
   (2,2) node{21}
   (6,2) node{43}
   (3,3) node{321}
   (4,2) node{32}
   (7,1) node{4};
   \draw[nodes={draw, diamond, line width=.4pt, scale=0.7,fill=gray!20}, minimum height=1.9cm, minimum width=1.9cm,font=\footnotesize]	
       (1,1) node{1};
   \draw[dashed,line width=1.3pt] 
     (3,0)--(6,3)
     (7,0)--(9,2);
     \draw[orange, line width=1.8pt] (-1,0)--(0,1)--(1,2)--(2,1)--(3,0);
\end{tikzpicture}}
\]

\[
P_D(1,2) \qquad 
\raisebox{-50pt}{
\begin{tikzpicture}[x=.95cm,y=.95cm, scale=0.7]
\foreach \x in {0,2,...,12}
\draw[shift={(\x-1,0)},color=black, ] (0pt,2pt) -- (0pt,-2pt) node[below] {\tiny $(\x,0)$};
\draw[line width=1 pt, ->] (-1,0)--(12,0);
\foreach \y in {1,..., 4}
	\draw[dotted](-1,\y)--(12,\y);
\draw[nodes={draw, diamond, line width=.4pt, scale=0.7, color=black!20}, minimum height=1.9cm, minimum width=1.9cm,font=\footnotesize]
   (3,1) node{2}
   (4,2) node{32}
   (6,2) node{43}
   (9,1) node{5} 
   (3,3) node{321}
   (7,1) node{4};
     \draw[nodes={draw, diamond, line width=.4pt, scale=0.7}, minimum height=1.9cm, minimum width=1.9cm,font=\footnotesize]
   (1,1) node{1}
   (5,1) node{3}
   (2,2) node{21};
        \draw[nodes={draw, diamond, line width=.4pt, scale=0.7,fill=gray!20}, minimum height=1.9cm, minimum width=1.9cm,font=\footnotesize]	
      (2,2) node{21};
   \draw[dashed,line width=1.3pt] 
     (3,0)--(6,3)
     (7,0)--(9,2);
     \draw[orange, line width=1.8pt] (-1,0)--(0,1)--(1,2)--(2,3)--(3,2)--(4,1)--(5,2)--(6,1)--(7,0);
\end{tikzpicture} }
\]
\[
P_D(1,3) \qquad
\raisebox{-50 pt}{\begin{tikzpicture}[x=.95cm,y=.95cm, scale=0.7]
\foreach \x in {0,2,...,12}
\draw[shift={(\x-1,0)},color=black, ] (0pt,2pt) -- (0pt,-2pt) node[below] {\tiny $(\x,0)$};
\draw[line width=1 pt, ->] (-1,0)--(12,0);
\foreach \y in {1,..., 4}
	\draw[dotted](-1,\y)--(12,\y);
       \draw[nodes={draw, diamond, line width=.4pt, scale=0.7,color=black!20}, minimum height=1.9cm, minimum width=1.9cm,font=\footnotesize]
   (3,1) node{2}
   (5,1) node{3}
   (4,2) node{32}
   (7,1) node{4};
     \draw[nodes={draw, diamond, line width=.4pt, scale=0.7}, minimum height=1.9cm, minimum width=1.9cm,font=\footnotesize]
   (1,1) node{1}
   (2,2) node{21}
   (9,1) node{5} 
   (2,2) node{21}
   (6,2) node{43};
     \draw[nodes={draw, diamond, line width=.4pt, scale=0.7,fill=gray!20}, minimum height=1.9cm, minimum width=1.9cm,font=\footnotesize]	
   (3,3) node{321};
   \draw[dashed,line width=1.3pt] 
     (3,0)--(6,3)
     (7,0)--(9,2);
     \draw[orange, line width=1.8pt] (-1,0)--(0,1)--(1,2)--(2,3)--(3,4)--(4,3)--(5,2)--(6,3)--(7,2)--(8,1)--(9,2)--(10,1)--(11,0);
\end{tikzpicture}} \]
\[
P_D(3,3) \qquad \raisebox{-50pt}{
\begin{tikzpicture}[x=.95cm,y=.95cm, scale=0.7]
\foreach \x in {0,2,...,12}
\draw[shift={(\x-1,0)},color=black, ] (0pt,2pt) -- (0pt,-2pt) node[below] {\tiny $(\x,0)$};
\draw[line width=1 pt, ->] (-1,0)--(12,0);
\foreach \y in {1,..., 4}
	\draw[dotted](-1,\y)--(12,\y);
     \draw[nodes={draw, diamond, line width=.4pt, scale=0.7,fill=gray!20}, minimum height=1.9cm, minimum width=1.9cm,font=\footnotesize]	
   (5,1) node{3};
     \draw[nodes={draw, diamond, line width=.4pt, scale=0.7,color=black!20}, minimum height=1.9cm, minimum width=1.9cm,font=\footnotesize]
   (1,1) node{1}
   (2,2) node{21}
   (9,1) node{5} 
   (2,2) node{21}
   (6,2) node{43}
   (3,3) node{321}
   (3,1) node{2}
   (4,2) node{32}
   (7,1) node{4};
   \draw[dashed,line width=1.3pt] 
     (3,0)--(6,3)
     (7,0)--(9,2);
     \draw[orange, line width=1.8pt] (3,0)--(4,1)--(5,2)--(6,1)--(7,0);
\end{tikzpicture}}
\]
\[
P_D(3,4) \qquad \raisebox{-50pt}{
\begin{tikzpicture}[x=.95cm,y=.95cm, scale=0.7]
\foreach \x in {0,2,...,12}
\draw[shift={(\x-1,0)},color=black, ] (0pt,2pt) -- (0pt,-2pt) node[below] {\tiny $(\x,0)$};
\draw[line width=1 pt, ->] (-1,0)--(12,0);
\foreach \y in {1,..., 4}
	\draw[dotted](-1,\y)--(12,\y);
        \draw[nodes={draw, diamond, line width=.4pt, scale=0.7,color=black!20}, minimum height=1.9cm, minimum width=1.9cm,font=\footnotesize]
   (3,1) node{2}
   (1,1) node{1}
   (2,2) node{21}
   (4,2) node{32}
     (3,3) node{321}
   (7,1) node{4};
    \draw[nodes={draw, diamond, line width=.4pt, scale=0.7,fill=gray!20}, minimum height=1.9cm, minimum width=1.9cm,font=\footnotesize]	
   (6,2) node{43};
     \draw[nodes={draw, diamond, line width=.4pt, scale=0.7}, minimum height=1.9cm, minimum width=1.9cm,font=\footnotesize]
   (5,1) node{3}
   (9,1) node{5};

   \draw[dashed,line width=1.3pt] 
     (3,0)--(6,3)
     (7,0)--(9,2);
     \draw[orange, line width=1.8pt] (3,0)--(4,1)--(5,2)--(6,3)--(7,2)--(8,1)--(9,2)--(11,0);
\end{tikzpicture}}
\]
\[
P_D(5,5) \qquad \raisebox{-50 pt}{
\begin{tikzpicture}[x=.95cm,y=.95cm, scale=0.7]
\foreach \x in {0,2,...,12}
\draw[shift={(\x-1,0)},color=black, ] (0pt,2pt) -- (0pt,-2pt) node[below] {\tiny $(\x,0)$};
\draw[line width=1 pt, ->] (-1,0)--(12,0);
\foreach \y in {1,..., 4}
	\draw[dotted](-1,\y)--(12,\y);
     \draw[nodes={draw, diamond, line width=.4pt, scale=0.7,fill=gray!20}, minimum height=1.9cm, minimum width=1.9cm,font=\footnotesize]	
   (9,1) node{5};
\draw[nodes={draw, diamond, line width=.4pt, scale=0.7, color=black!20}, minimum height=1.9cm, minimum width=1.9cm,font=\footnotesize]
   (3,1) node{2}
   (4,2) node{32}
   (7,1) node{4}
      (1,1) node{1}
   (2,2) node{21}
   (5,1) node{3} 
   (2,2) node{21}
   (6,2) node{43}
   (3,3) node{321};
   \draw[dashed,line width=1.3pt] 
     (7,0)--(9,2);
     \draw[orange, line width=1.8pt] (7,0)--(8,1)--(9,2)--(11,0);
\end{tikzpicture}}
\]
\end{ex}

Now we define the number $p_D(i,j)$ by
\begin{equation} \label{eq-pd} p_D(i,j) :=  (\# \text{ of steps in the ascents of } P_D(i,j)) -1. \end{equation}
Then we observe
\begin{align*} p_D(i,j) & =  (\# \text{ of blocks in the ascents}) \\ &=
(\# \text{ of peaks})+(\text{height of the first peak}) - 2. \end{align*}
Recall that we have the map $\Psi$ from the set of Dyck paths into the set of fully commutative elements. Now we state the main result of this section:
\begin{prop}
\label{AForm}
Assume that a Dyck path $D$ does not have an ascent longer than $1$ step except for an ascent beginning on the $x$-axis. Then the dimension $d_D$ of the homogeneous module  $S(\Psi(D))$ is given by the formula
\begin{equation} \label{ff}
   d_D = \prod \frac{k!}{p_D(i,j)}
\end{equation}
where $k =(\textrm{sum of peak heights}) - (\textrm{\# of peaks})$ for the path $D$ and the product runs through all blocks $T_i^j$ on the extended ascents of $D$.
\end{prop}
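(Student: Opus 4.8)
The plan is to reduce $d_D$ to a classical tableau count and then to recognize the numbers $p_D(i,j)$ as hook lengths. First I would invoke Theorem~\ref{com}: the module $S(\Psi(D))$ has basis $\{v_{\bw}\}$ indexed by the words of the homogeneous component $\Psi(D)$, so $d_D$ is exactly the number of reduced expressions of the fully commutative element $\bw = \Psi(D)$. Because $\bw$ is fully commutative, its reduced words are precisely the linear extensions of its heap poset $P(\bw)$: writing $\bw$ in the canonical form $T_{i_1}^{m_1}\cdots T_{i_\ell}^{m_\ell}$, each of the $k$ letters contributes one element of $P(\bw)$, with covering relations coming from neighboring generators. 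Thus $d_D$ equals the number of linear extensions of $P(\bw)$, and it remains to evaluate this number.

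Next I would show that the hypothesis on $D$---that no ascent has length exceeding $1$ except the one beginning on the $x$-axis---is exactly the condition that $\bw$ be a dominant minuscule element, so that $P(\bw)$ is the cell poset of an ordinary straight Young diagram $\lambda$ rather than a properly skew shape. Concretely, after a $45^{\circ}$ rotation the blocks lying on the extended ascents of $D$ should be identified with the cells of $\lambda$. One checks there are exactly $\sum_{j}(\text{height of peak } j) - \ell = k$ such blocks, matching $|\lambda| = k$, and that an ascent of length $>1$ starting above the $x$-axis would force a missing inner corner, i.e. a genuinely skew $\lambda/\mu$ with $\mu \neq \varnothing$ for which no product formula holds. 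Under the stated hypothesis no such configuration occurs, so the linear extensions of $P(\bw)$ are counted by standard Young tableaux of the straight shape $\lambda$.

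Finally I would identify the subpath $P_D(i,j)$ with the hook of the cell corresponding to the block $T_i^j$: the up-steps of $P_D(i,j)$ trace the leg of the hook together with the cell itself, while the down-steps trace the arm, so that $p_D(i,j) = (\#\text{ ascent steps}) - 1$ equals $\operatorname{arm} + \operatorname{leg} + 1$, the hook length of that cell. Granting this, the Frame--Robinson--Thrall hook-length formula---the type $A$ specialization of the Peterson--Proctor formula recalled from \cite{Klesh2008}---gives
\[
 d_D = \frac{k!}{\prod_{\text{cells}} h} = \frac{k!}{\prod p_D(i,j)},
\]
which is the content of \eqref{ff}. As a sanity check on the running example of Figure~\ref{Ascents}, the six blocks produce the multiset of $p_D$-values $\{5,3,1,3,1,1\}$, the shape is the staircase $(3,2,1)$, and $6!/45 = 16$.

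The main obstacle is the geometric bookkeeping underlying the last two paragraphs: proving rigorously that the blocks on the extended ascents realize the cells of a straight shape $\lambda$ (so that $\bw$ is genuinely dominant minuscule under the hypothesis), and that the recursive descent-and-ascent rule defining $P_D(i,j)$ computes precisely the arm-plus-leg-plus-one hook length. Both reduce to a careful dictionary between the triangular-lattice coordinates $(i+j-1,\, j-i+1)$ of the block $T_i^j$ and the row/column coordinates of $\lambda$; once this dictionary is in place, the hook-length formula applies verbatim and the proposition follows.
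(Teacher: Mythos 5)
Your plan follows the same arc as the paper's own proof: the dimension of $S(\Psi(D))$ equals the number of reduced words of the fully commutative element (Theorem \ref{com} and Lemma \ref{lem-equiv}), the hypothesis on ascents puts you in the dominant minuscule setting (Lemma \ref{cor-ascent}), and a Peterson--Proctor-type product formula finishes. The difference is in the bookkeeping that matches the factors of the product with the numbers $p_D(i,j)$. You route this through the heap poset, a straight Young diagram $\lambda$, and the Frame--Robinson--Thrall formula; the paper instead applies the Peterson--Proctor formula of \cite{Klesh2008} directly in root-theoretic form, exhibiting an explicit bijection between the inversion set $\{\beta\in\Delta^+ : \bw(\beta)<0\}$ of $\bw=\Psi(D)$ and the blocks $T_i^j$ on the extended ascents, and then computing $\mathrm{ht}(\beta)$ from the segment form $\bw=T_{i_1}^{m_1}\cdots T_{i_\ell}^{m_\ell}$: the root attached to the block $T_{i_j}^{n}$ is a sum of the simple roots $\alpha_{i_j},\dots,\alpha_{n}$ together with one extra simple root for each later peak whose (truncated) segment is nonempty, giving $\mathrm{ht}(\beta)=p_D(i,j)$ on the nose. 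That two-line root computation is exactly the ``careful dictionary'' you postpone; in the paper it is not a technicality to be filled in later but the entire content of the proof, and working with root heights avoids ever having to set up the rotated coordinates for $\lambda$.

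Two concrete problems remain in your write-up. First, under the stated hypothesis it is $\Psi(D)^{-1}$, not $\bw=\Psi(D)$, that is dominant minuscule---this is precisely how Lemma \ref{cor-ascent} reads. Indeed, for the running example $\bw=T_1^3T_3^4T_5^5=s_3s_2s_1s_4s_3s_5$, the last occurrence of $s_2$ is followed by two generators ($s_1$ and $s_3$) that do not commute with it, violating condition (2) of Proposition \ref{dm}; so your claim that the hypothesis makes $\bw$ itself dominant minuscule is false as stated. The slip is harmless for the count, since reduced words of $\bw$ and of $\bw^{-1}$ correspond by reversal and the number of linear extensions of a poset is unchanged under dualization, but a correct argument must be routed through $\bw^{-1}$. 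Second, the two identifications you label ``the main obstacle''---that the blocks on extended ascents are the cells of a \emph{straight} shape, and that $p_D(i,j)$ is the hook length of the corresponding cell---are not an afterthought: they are the theorem. As it stands, your proposal verifies the formula only on the single example $6!/(1\cdot 3\cdot 5\cdot 1\cdot 3\cdot 1)=16$ of Example \ref{ex-dim}. Until that dictionary is proved (for instance by the explicit inversion-root computation above, which is your ``arm $+$ leg $+1$'' claim in disguise), what you have is a correct and well-motivated outline rather than a proof.
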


A proof of the above proposition will be given in the rest of this section. Let us see an example before we begin the proof.

\begin{ex} \label{ex-dim}
  For the path $D$ in Figure \ref{Ascents}, the numbers $p_D(i,j)$ are shown here:
  \[
     \begin{tabular}{ r |c | c | c | c | c |c }
     $(i,j)$ & $(1,1)$ & $(1,2)$ & $(1,3)$ & $(3,3)$ & $(3,4)$ & $(5,5)$\\\hline
      $p_D(i,j)$ & 1 & 3 & 5 & 1 &  3 & 1
     \end{tabular}
  \]
  Then the dimension of the homogeneous module corresponding to the fully commutative element $\Psi(D)=321435$ is 
  \[
     d_D= \frac{6!}{1\cdot3\cdot5\cdot1\cdot3\cdot1} = 16.
  \]
\end{ex}

\medskip

Recall that an element $\bw \in W$ is called {\em dominant minuscule} if there is a dominant integral weight $\Lambda$ and a reduced expression $\bw =s_{i_1} s_{i_2} \cdots s_{i_d}$ such that 
\[   s_{i_k} s_{i_{k+1}} \cdots s_{i_d} \Lambda = \Lambda -\alpha_{i_k} -\alpha_{i_{k+1}} - \cdots - \alpha_{i_d} \qquad (1 \le  k \le d) .\]
It is known that dominant minuscule elements are fully commutative. We have the following characterization of dominant minuscule elements.

\begin{prop} \cite[Proposition 2.5]{Stembridge2001} \label{dm}
If $\bw = s_{i_1} s_{i_2} \cdots s_{i_d} \in W$ is a reduced expression, then $\bw $  is dominant minuscule if and only if the following two
conditions are satisfied:
\begin{enumerate}
\item between every pair of occurrences of a generator $s_i$ (with no other
occurrences of $s_i$ in between) there are exactly two generators (possibly
equal to each other) that do not commute with $s_i$;
\item the last occurrence of each generator $s_i$ is followed by at most one
generator that does not commute with $s_i$.

\end{enumerate}

\end{prop}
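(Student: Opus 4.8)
The plan is to translate the defining condition for dominant minuscule into a system of numerical equations on the integers $\langle \Lambda,\alpha_i^\vee\rangle$, and then recognize conditions (1) and (2) as exactly the solvability of this system by a dominant $\Lambda$. First I would set $\nu_k := s_{i_k}s_{i_{k+1}}\cdots s_{i_d}\Lambda$ (with $\nu_{d+1}=\Lambda$) and use the reflection formula $s_i\nu = \nu - \langle\nu,\alpha_i^\vee\rangle\alpha_i$. Since $\nu_k = s_{i_k}\nu_{k+1}$, substituting $\nu_{k+1}=\Lambda-\sum_{j>k}\alpha_{i_j}$ and inducting downward from $k=d$ shows that the full family of defining equalities $\nu_k=\Lambda-\sum_{j\ge k}\alpha_{i_j}$ is equivalent to the scalar equations
\[ \Big\langle \Lambda - \sum_{j>k}\alpha_{i_j},\,\alpha_{i_k}^\vee\Big\rangle = 1 \qquad (1\le k\le d). \]
Expanding the pairing using the symmetric simply-laced Cartan entries $\langle\alpha_{i_j},\alpha_{i_k}^\vee\rangle$, which equal $2$ when $i_j=i_k$, equal $-1$ when $i_j,i_k$ are neighbors, and vanish otherwise, each equation becomes
\[ \langle\Lambda,\alpha_{i_k}^\vee\rangle = 1 + 2p_k - q_k, \]
where $p_k=\#\{j>k:i_j=i_k\}$ and $q_k=\#\{j>k:i_j\text{ is a neighbor of }i_k\}$. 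So $\bw$ is dominant minuscule for this word iff there exist integers $\langle\Lambda,\alpha_i^\vee\rangle\ge 0$ solving this system.

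For the forward direction I would extract (1) and (2) from these equations. Comparing the equation at two consecutive occurrences $k<k'$ of the same generator $s_i$ (with no $s_i$ strictly between), one has $p_k=p_{k'}+1$, while $q_k-q_{k'}$ counts exactly the neighbors of $i$ lying strictly between the two occurrences. Since both right-hand sides equal $\langle\Lambda,\alpha_i^\vee\rangle$, subtracting forces $q_k-q_{k'}=2$, which is precisely condition (1). At the last occurrence $k$ of $s_i$ we have $p_k=0$, so $\langle\Lambda,\alpha_i^\vee\rangle = 1-q_k$, and dominance $\langle\Lambda,\alpha_i^\vee\rangle\ge 0$ gives $q_k\le 1$, which is condition (2).

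For the reverse direction, assuming (1) and (2), I would build $\Lambda$ explicitly: for each $i$ occurring in $\bw$ let $k$ be its last occurrence and put $\langle\Lambda,\alpha_i^\vee\rangle:=1-q_k\in\{0,1\}$ (nonnegative by (2)), and put $\langle\Lambda,\alpha_i^\vee\rangle:=0$ for $i$ not occurring; this $\Lambda$ is dominant integral. The remaining point is that this single $\Lambda$ must satisfy the equation at \emph{every} position, not just at last occurrences. Listing the occurrences of a fixed $s_i$ as $k_0<k_1<\cdots<k_m$, condition (1) supplies exactly two neighbors of $i$ in each gap, so telescoping yields $q_{k_t}=2(m-t)+q_{k_m}$ while $p_{k_t}=m-t$; hence $1+2p_{k_t}-q_{k_t}=1-q_{k_m}=\langle\Lambda,\alpha_i^\vee\rangle$ for every $t$, as required.

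The one genuine subtlety, and the step I expect to need the most care, is the gap between ``a'' reduced word (used in the definition of dominant minuscule) and ``the'' given reduced word in the statement. Since dominant minuscule elements are fully commutative, any two reduced words differ by commutations of non-adjacent generators, and both the numerical system above and the counts appearing in (1)--(2) are invariant under such swaps (equivalently, they depend only on the heap of $\bw$). I would isolate this commutation-invariance as a short preliminary observation, so that the equivalence established for one reduced word transfers to the given one; this is the only place where full commutativity is actually invoked and where inattentive bookkeeping of the counts $p_k,q_k$ could introduce an error.
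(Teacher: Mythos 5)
The paper does not prove this proposition at all---it is imported verbatim from Stembridge \cite[Proposition 2.5]{Stembridge2001}---so there is no internal proof to compare against; measured on its own terms, your argument is correct, and it is essentially the standard (indeed Stembridge's) argument. Your linearization is right: the defining equalities are equivalent, by downward induction on $k$, to the scalar system $\langle\Lambda,\alpha_{i_k}^\vee\rangle = 1+2p_k-q_k$; subtracting the equations at consecutive occurrences of a generator yields exactly condition (1), dominance at last occurrences yields condition (2), and in the converse your telescoping $q_{k_t}=2(m-t)+q_{k_m}$, $p_{k_t}=m-t$ shows the weight with coordinates $1-q_{k_m}\in\{0,1\}$ (and $0$ on letters not occurring) solves every equation. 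Your handling of the one genuine subtlety is also sound and non-circular: full commutativity of dominant minuscule elements is established in Stembridge's paper independently of and prior to his Proposition 2.5 (and is asserted separately in the present paper just before the statement), and the counts $p_k,q_k$ are indeed unchanged by swapping adjacent commuting letters, since such letters are neither equal nor neighbors. One caveat worth making explicit: your proof is specific to the simply-laced case (you use $\langle\alpha_j,\alpha_i^\vee\rangle\in\{2,-1,0\}$), whereas Stembridge's result covers all Weyl groups, where the token-count in condition (1) must be weighted --- e.g.\ in type $B_2$ the element $s_1s_2s_1$ with $\langle\alpha_2,\alpha_1^\vee\rangle=-2$ is dominant minuscule although only one non-commuting term separates the two occurrences of $s_1$. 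Since this paper invokes the proposition only in type $A$, the restriction is harmless here, but you should state it.
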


 For a Dyck path $D$, it is clear that $\Psi(D)^{-1}$ is also a fully commutative element. The following corollary characterizes dominant minuscule elements using shapes of Dyck paths. One can compare it with Lemma 3.9 in \cite{Klesh2008}, where {\em straight shapes} are used.

\begin{lemma} \label{cor-ascent}
Let $D$ be a Dyck path. Then $\Psi(D)^{-1}$ is dominant minuscule if and only if  any ascent in $D$ not beginning on the $x$-axis has a length of 1. 
  \end{lemma}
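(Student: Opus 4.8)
The plan is to apply Stembridge's characterization (Proposition \ref{dm}) to an explicit reduced word for $\Psi(D)^{-1}$ and then translate its two conditions into the shape of $D$. By Proposition \ref{can form}, $\Psi(D)$ has canonical form $\bw = T_{i_1}^{m_1}\cdots T_{i_\ell}^{m_\ell}$ with $i_1<\cdots<i_\ell$ and $m_1<\cdots<m_\ell$, where the block $T_{i_j}^{m_j}$ is the $j$-th peak of $D$ (peaks are ordered left to right by the index $j$), of height $(m_j-i_j)+2$. Since $(T_i^j)^{-1}=s_is_{i+1}\cdots s_j$, reading the inverse gives the reduced word
\[ \Psi(D)^{-1} = (T_{i_\ell}^{m_\ell})^{-1}\cdots(T_{i_1}^{m_1})^{-1} = [\,i_\ell,i_\ell+1,\dots,m_\ell,\ \dots,\ i_1,i_1+1,\dots,m_1\,], \]
in which the segment of smaller index lies further to the right. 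As the inverse of a fully commutative element is again fully commutative, Proposition \ref{dm} applies to this particular expression.

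First I would set up the geometric dictionary between $D$ and the pairs $(i_j,m_j)$. Between two consecutive peaks $j-1$ and $j$ there is no intermediate square-block peak, so the path descends to a single valley and then ascends; comparing the $x$-coordinate $i+j-1$ and height $j-i+2$ of a block $T_i^j$ shows that this descent has length $i_j-i_{j-1}$ and this ascent has length $m_j-m_{j-1}$, with valley height $m_{j-1}-i_j+2$. Hence the ascent leading to peak $j$ begins above the $x$-axis precisely when $i_j\le m_{j-1}+1$, and in that case its length is exactly $m_j-m_{j-1}$; when $i_j\ge m_{j-1}+2$ the path returns to the axis (possibly through bottom triangles) and the ascent to peak $j$ begins on the axis. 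Thus the hypothesis ``every ascent not beginning on the $x$-axis has length $1$'' is equivalent to the arithmetic statement: for every $j\ge 2$ with $i_j\le m_{j-1}+1$ one has $m_j-m_{j-1}=1$.

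Next I would verify Stembridge's two conditions on the word above. For condition (1), I would note that the occurrences of $s_p$ lie in the contiguous run of indices $j$ with $i_j\le p\le m_j$, and that between the occurrences in two consecutive such segments $j$ and $j-1$ the intervening letters are the tail $s_{p+1},\dots,s_{m_j}$ followed by the head $s_{i_{j-1}},\dots,s_{p-1}$; since $i_{j-1}<i_j\le p\le m_{j-1}<m_j$, both neighbors $s_{p-1}$ and $s_{p+1}$ occur exactly once, so condition (1) is automatic (it is just full commutativity). For condition (2), the last occurrence of $s_p$ sits in the smallest-indexed segment $a=a(p)$ containing it, after which come its tail $s_{p+1},\dots,s_{m_a}$ and all segments of index $<a$. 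I would show that every segment of index $<a$ has $m_j<p$, so among them only the segment with $m_{a-1}=p-1$ (unique by monotonicity of the $m_j$) contributes a neighbor $s_{p-1}$, while the tail contributes $s_{p+1}$ iff $m_a>p$. Hence the number of neighbors following the last $s_p$ equals $[m_a>p]+[m_{a-1}=p-1]$ (Iverson brackets), and condition (2) fails at $p$ exactly when both brackets equal $1$, i.e. when $p=m_{a-1}+1$, $i_a\le m_{a-1}+1$, and $m_a\ge m_{a-1}+2$.

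Finally I would match the two descriptions: a failure of condition (2) at $p=m_{a-1}+1$ says exactly that the ascent to peak $a$ begins off the axis ($i_a\le m_{a-1}+1$) and has length $m_a-m_{a-1}\ge 2$, and conversely any such off-axis ascent of length $\ge 2$ produces a failure at $p=m_{a-1}+1$ (the routine check here is that $a(p)=a$ for this $p$). Since condition (1) always holds, $\Psi(D)^{-1}$ is dominant minuscule iff condition (2) holds iff no off-axis ascent has length $\ge 2$, which is the assertion. The main obstacle is the careful bookkeeping in the geometric dictionary: justifying the single-valley-between-consecutive-peaks picture, pinning down the valley heights, and handling the boundary case where the path touches the axis (with bottom triangles), together with correctly locating last occurrences and neighbor contributions in the segment-index count. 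None of this is deep, but the whole equivalence turns on getting the boundary inequalities ($i_j\le m_{j-1}+1$ versus $i_j\ge m_{j-1}+2$) exactly right.
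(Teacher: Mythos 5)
Your proof is correct and follows essentially the same route as the paper: both apply Stembridge's characterization (Proposition \ref{dm}) to a reduced word obtained from the canonical form, observe that condition (1) holds automatically for such words, and identify failures of condition (2) exactly with ascents of length at least $2$ that do not begin on the $x$-axis. The paper phrases this via first occurrences of generators in $\Psi(D)$, argued segment-by-segment, while you count neighbors after last occurrences in the explicit reduced word for $\Psi(D)^{-1}$; the substance is the same.
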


\begin{proof} 
Assume that  $D$ has no ascents longer than $1$ besides those that begin on the $x$-axis.
Since we must have a descent and then an ascent to get from one peak to the next, we see that the condition (i) of Proposition \ref{dm} is satisfied by $\Psi(D)$ and $\Psi(D)^{-1}$. Write $\Psi(D)=T_{i_1}^{m_{1}}T_{i_2}^{m_{2}}\cdots T_{i_\ell}^{m_{\ell}}$ as before. Then every generator in $T_{i_1}^{m_1}$ first appears in $\Psi(D)$ and there is at most one generator before its occurrence that does not commute with it. Thus $\Psi(D)^{-1}$ satisfies the condition (ii) of Proposition \ref{dm} with the generators in $T_{i_1}^{m_1}$. 

Consider now the peak corresponding to the segment $T^{m_2}_{i_2}$.  If we arrive there after an ascent of length 1, then $m_2=m_1+1$ and $i_1<i_2$. Thus the only new generator appearing in $T_{i_2}^{m_2}$ is $s_{m_2}=s_{m_1+1}$ and it commutes with all the generators preceding it except $s_{m_1}$.  On the other hand, if we arrive at this peak after following an ascent longer than 1 step then, by assumption, this ascent begins on the $x$-axis.  Then, we necessarily find that $i_2 > m_1 + 1$.  So every generator in $T_{i_2}^{m_2}$ appears here for the first time, but commutes with all generators appearing previously.  We can continue inductively, analyzing the generators appearing for the first time in each segment $T_{i_j}^{m_j}$,  and see that the condition (ii) of Proposition \ref{dm} is satisfied by $\Psi(D)^{-1}$. Therefore, the element $\Psi(D)^{-1}$ is dominant minuscule. 

Conversely, if $\Psi(D)^{-1}$ is dominant minuscule, the condition (ii) of Proposition \ref{dm} implies that any generator appearing for the first time in a segment $T_{i_j}^{m_j}$ will either commute with all previously appearing generators (thus the ascent corresponding peak begins on the $x$-axis), or that it does not commute with exactly one previously appearing generator (thus $m_{j-1} = m_j -1$, and the ascent was of length 1). 
  \end{proof}

\begin{proof} [Proof of Proposition \ref{AForm}]
We will obtain the formula \eqref{ff} as a reformulation of the Peterson-Proctor formula \cite[Theorem 3.10]{Klesh2008}. Write $\bw=\Psi(D)$. It follows from Lemma \ref{cor-ascent} that $\bw^{-1}$ is dominant minuscule. Then we only need to establish two things: First, a bijective correspondence between $\{ \beta \in \Delta^+ \,|\, \bw (\beta) < 0 \}$ and $\{ P_D(i,j) \,|\, T_i^j \text{ is on the extended ascents of } D \}$, where $\Delta^+$ is the set of positive roots. Second, the equality $\mathrm{ht}(\beta)=p_D(i,j)$ when $\beta$ corresponds to the path $P_D(i,j)$.

We write $\Psi(D)=T_{i_1}^{m_{1}}T_{i_2}^{m_{2}}\cdots T_{i_\ell}^{m_{\ell}}$. Each $\beta \in \Delta^+$ with $\bw(\beta)<0$ determines a unique $(i_k,n_k)$, $i_k \le n_k \le m_k$, such that
\[ \beta=\alpha_{n_k}T_{i_k}^{n_k-1}T_{i_{k+1}}^{m_{k+1}} \cdots T_{i_l}^{m_l} = \alpha_{n_k}T_{i_k}^{n_k-1}T_{i_{k+1}}^{n_{k}+1}T_{i_{k+2}}^{n_{k}+2} \cdots T_{i_l}^{n_k+l-k}, \] where the action on $\alpha_{n_k}$ is from the right. On the other hand, each block $T_{i_k}^{n_k}$, $i_k \le n_k \le m_k$, is on an extended ascent and 
\[ \Psi(P_D(i_k, n_k))= T_{i_k}^{n_k}T_{i_{k+1}}^{n_{k}+1}T_{i_{k+2}}^{n_{k}+2} \cdots T_{i_l}^{n_k+l-k}. \] Then the correspondence $\beta \mapsto P_D(i_k,n_k)$ is clearly one-to-one and onto.  

Furthermore, we see that
\[ \beta = \alpha_{n_k}T_{i_k}^{n_k-1}T_{i_{k+1}}^{n_{k}+1}T_{i_{k+2}}^{n_{k}+2} \cdots T_{i_l}^{n_k+l-k} = (\alpha_{i_k}+ \cdots + \alpha_{n_k})+ \alpha_{n_k+1}+ \cdots + \alpha_{n_k+l-k}, \] and $\mathrm {ht}(\beta)= n_k-i_k+1+l-k=(\# \text{ of steps in the ascents}) -1=p_D(i_k,n_k)$ from \eqref{eq-pd}. This completes the proof.

\end{proof}

Even when Proposition \ref{AForm} does not apply directly, we may still find the dimension of the corresponding module: We can
  \begin{itemize}
   \item Consider the reverse path (reflected left to right), or  
   \item Invert the corresponding fully commutative element, and consider the associated Dyck path.
  \end{itemize}
Note that reversing a path corresponds to the graph automorphism of the Dynkin diagram.
  The two options would give distinct paths, but if either satisfies the condition of Proposition \ref{AForm}, then we can  obtain the correct dimension using the formula.

  \begin{ex}
  The path $D$ in Figure \ref{nonex} below does not satisfy the condition of Proposition \ref{AForm}.  
  However, we note that the reverse of the path $D$  is nothing but the path in Figure \ref{Ascents}, for which we computed the dimension in Example \ref{ex-dim}. Thus we obtain the same dimension, $16$, for the homogeneous representation corresponding to $D$.
     \begin{figure}[h]
\[
\begin{tikzpicture}[x=.95cm,y=.95cm, scale=0.7]
\foreach \x in {0,-2,...,-10}
\draw[shift={(\x-1,0)},color=black, ] (0pt,2pt) -- (0pt,-2pt);
\draw[line width=1 pt, <-] (2,0)--(-12,0);
\foreach \y in {1,..., 4}
  \draw[dotted](1,\y)--(-12,\y);
   \draw[orange, line width=1.8pt] (1,0)--(-3,4)--(-5,2)--(-6,3)--(-8,1)--(-9,2)--(-11,0);
\end{tikzpicture}
\]
\caption{A Dyck Path for which the formula does not work directly}\label{nonex}
\end{figure}
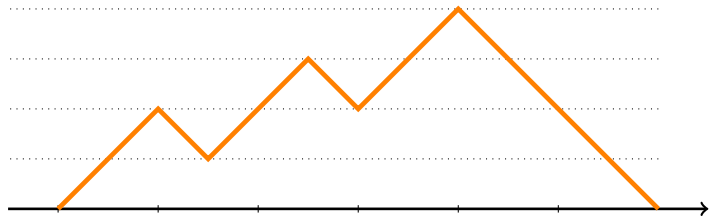

\end{ex}

\vskip 1 cm

\bibliographystyle{amsplain}

\end{document}